\newtheorem{thm}[equation]{Theorem}
\newtheorem{cor}[equation]{Corollary}
\newtheorem{lem}[equation]{Lemma}
\newtheorem{prop}[equation]{Proposition}
\theoremstyle{definition}
\theoremstyle{remark}
\newtheorem{rem}[equation]{Remark}
\newcommand{\C}[1]{\mathscr{#1}}
\newcommand{\D}[1]{\mathbb{#1}}
\newcommand{\der}{\mathbb{D}}
\newcommand{\perf}[1]{\operatorname{Perf}(#1)}
\newcommand{\db}{D^b}
\newcommand{\cb}{C^b}
\def\op{\mathrm{op}}
\def\r{\rightarrow} 
\newcommand{\deltan}[1]{\mathbf{#1}}
\newcommand{\Mod}[1]{\mathrm{Mod}(#1)}
\renewcommand{\mod}[1]{\mathrm{mod}(#1)}
\newcommand{\proj}[1]{\mathrm{proj}(#1)}
\newcommand{\sMod}[1]{\mathrm{\underline{Mod}}(#1)}
\newcommand{\smod}[1]{\mathrm{\underline{mod}}(#1)}
\def\ho{\operatorname{Ho}}
\newcommand{\iso}[1]{\operatorname{iso}(#1)}
\def\ext{\operatorname{Ext}}
\def\st{\stackrel} 
\def\To{\longrightarrow}
\newcommand{\ad}{\operatorname{add}}
\renewcommand{\ker}{\operatorname{Ker}}
\newcommand{\im}{\operatorname{Im}}
\numberwithin{equation}{section}
\begin{document}

\title{A note on ${K}$-theory and triangulated derivators}%
\author{Fernando Muro}%
\address{Universidad de Sevilla,
Facultad de Matem\'aticas,
Departamento de \'Algebra,
Avda. Reina Mercedes s/n,
41012 Sevilla, Spain}
\email{fmuro@us.es}
\author{George Raptis}%
\address{Universit\"{a}t Osnabr\"{u}ck, 
Institut f\"{u}r Mathematik, 
Albrechtstrasse 28a,
49069 Osnabr\"{u}ck, Germany}
\email{graptis@mathematik.uni-osnabrueck.de}

\thanks{The first  author was partially supported
by the Spanish Ministry of 
Science and Innovation under the grants  MTM2007-63277 and MTM2010-15831, by the Government of Catalonia under the grant SGR-119-2009,  and by the Andalusian Ministry of Economy, Innovation and Science under the grant FQM-5713.}
\subjclass{19D99, 	18E30, 16E45}
\keywords{$K$-theory, exact category, Waldhausen category, Grothendieck derivator}

\begin{abstract}
In this paper we show an example of two differential graded algebras that have the same derivator $K$-theory but non-isomorphic Waldhausen $K$-theory. We also prove that Maltsiniotis's comparison and localization conjectures for derivator $K$-theory cannot be simultaneously true.
\end{abstract}

\maketitle


\section{Introduction}

\emph{Derivators} were introduced by Grothendieck \cite{derivateurs} as an intermediate step between a (suitable) category $\C{W}$ with weak equivalences and its homotopy category $\ho(\C{W})$, obtained by formally inverting the weak equivalences. In addition to the homotopy category $\ho(\C{W})$, the derivator $\D{D}(\C{W})$ contains all the homotopy categories of diagrams $\ho(\C{W}^{I^{\op}})$ indexed by the opposite of a finite direct  category~$I$ together with the data of the various derived functors associated to the change of indexing category. 
Formally, a derivator $\D{D}$ is a $2$-functor
$$\D{D}\colon \mathbf{Dir}_{f}^{\op}\To\mathbf{Cat}$$
satisfying a set of axioms inspired by the example  $\D{D}(\C{W})(I)=\ho(\C{W}^{I^{\op}})$. Here $\mathbf{Cat}$ is the $2$-category of small categories and $\mathbf{Dir}_{f}\subset \mathbf{Cat}$ is the $2$-full sub-$2$-category of finite direct   categories, i.e. those categories whose nerve has a finite number of non-degenerate simplices.

Maltsiniotis, after Franke \cite{frankepre}, added an extra axiom to define \emph{triangulated derivators} \cite{ktdt}. An important  example that was worked out by Keller \cite{dtce} is the derivator of the category $\C{W}=C^{b}(\C{E})$ of bounded complexes in an exact category $\C{E}$, where the weak equivalences are the quasi-isomorphisms in the abelian envelope of $\C{E}$. This derivator will be denoted $\der(\C{E})$ instead of $\der(C^{b}(\C{E}))$.  Maltsiniotis defined the $K$-theory $K(\D{D})$ of a triangulated derivator $\D{D}$, and stated three conjectures: \emph{comparison}, \emph{localization} and \emph{additivity}. The proof of the additivity conjecture by Cisinski and Neeman \cite{adkt} showed that Maltsiniotis's $K$-theory produces a $K$-theoretically sound construction. The combination of the other two conjectures claims that derivator $K$-theory, as a functor from a suitably defined category of small triangulated derivators to spaces, recovers Quillen's $K$-theory of exact categories and satisfies localization. Schlichting \cite{kttc} showed that there cannot exist a functor from small triangulated categories to spaces that has these two properties.

The comparison conjecture says that a certain natural map from Quillen's to Maltsiniotis's $K$-theory space, 
\[
\rho \colon K(\C{E})\To K(\D{D}(\C{E})),
\]
is a weak homotopy equivalence for every exact category $\C{E}$, i.e., the induced homomorphisms~$\rho_{n}=\pi_{n}(\rho)$, $$\rho_n \colon K_n(\C{E})\To K_n(\D{D}(\C{E})),$$
are isomorphisms for every $n \geq 0$. Maltsiniotis proved this for $n=0$ \cite{ktdt}. The conjecture is also true for $n=1$ \cite{malt}. Moreover, the following more general version holds. If $\C{W}$ is a Waldhausen category with cylinders and a saturated class of weak equivalences, then $\D{D}(\C{W})$ is a  \emph{right pointed derivator} \cite{ciscd}. Garkusha \cite{sdckt1} considered the derivator $K$-theory of this wider class of derivators. There is a natural comparison map 
\begin{equation}\label{mun}
\mu \colon K(\C{W})\To K(\D{D}(\C{W})),
\end{equation}
from Waldhausen's to Garkusha's $K$-theory, and so natural comparison homomorphisms $\mu_{n}=\pi_{n}(\mu)$, $n\geq0$,
\begin{equation*}
\mu_{n}\colon K_{n}(\C{W})\To K_{n}(\D{D}(\C{W})),
\end{equation*}
such that Maltsiniotis's comparison map 
is obtained by precomposing with the Gillet--Waldhausen equivalence \cite{rrthakt,tt,tckt},
$$\rho\colon K(\C{E})\st{\sim}\To K(C^{b}(\C{E}))\st{\mu}\To K(\D{D}(\C{E})).$$
The homomorphisms $\mu_{n}$ are isomorphisms for $n=0,1$ \cite{malt}. 

To\"en and Vezzosi showed in \cite{ktsc} that $\mu$ cannot be an equivalence for all $\C{W}$ in all higher dimensions. More precisely, taking $\C{W}=\mathcal{R}_{f}(X)$ to be the category of finite $CW$-complexes $Y$ relative to a space $X$ equipped with a retraction $Y\r X$ \cite[2.1]{akts}, they noticed that the natural comparison map  $K(\C{W})\r K(\D{D}(\C{W}))$ cannot be a weak homotopy equivalence in general, since the action of the topological monoid of self-equivalences of $X$ on $K(\C{W})$ does not factor through an aspherical space. Despite the fact that the derivator encodes 
a lot of the homotopy theory of~$\C{W}$, it still forgets the higher homotopical information of the 
self-equivalences of~$\C{W}$, and in some sense that is exactly everything that it forgets \cite{pcthqd}.

Thus the failure of the generalized comparison conjecture in this case is due to the homotopical complexity of the algebraic $K$-theory of spaces. This could suggest that 
it may however be true if we restrict to categories of algebraic origin such as, for instance,   the category $\C{W}=\perf{A}$ of perfect right modules over a differential graded algebra $A$. Note that $\D{D}(\perf{A})$ is a triangulated derivator while $\D{D}(\mathcal{R}_{f}(X))$ is not. 

Our first theorem shows that the generalized comparison conjecture is false even in this algebraic context. Moreover, it shows that $K_{n}(\perf{A})$ need not be isomorphic to $K_{n}(\D{D}(\perf{A}))$ even abstractly as abelian groups. This question was so far open.

Let $A=\mathbb{F}_p[x^{\pm1}]$ be the graded algebra of Laurent polynomials with a degree $-1$ indeterminate $x$, $|x|=-1$, over the field with $p$ elements, $p$ a prime number, endowed with the trivial differential $d=0$.

Let $B$ be the  graded algebra over the integers $\mathbb{Z}$ generated by $e$ and  $x^{\pm 1}$ in degree $|e|=|x|=-1$ with the following additional relations,
\begin{align*}
e^2&=0,&
ex+xe&=x^2,
\end{align*}
endowed with the degree $1$ differential $d$ defined by
\begin{align*}
d(e)&=p,&
d(x)&=0.
\end{align*}
Notice that $H^{*}(B)=A$.

\begin{thm}\label{counterexample}
The differential graded algebras $A$ and $B$ have equivalent derivator $K$-theory, $$K(\D{D}(\perf{A})) \simeq K(\D{D}(\perf{B})),$$ but different Waldhausen $K$-theory  $K_i(\perf{A})\ncong K_i(\perf{B})$ in dimension $i=4$ if $p>3$, in dimension $i=5$ if $p=3$, and in dimension $i=3$ if $p=2$.
\end{thm}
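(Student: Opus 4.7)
The plan is to treat the theorem as two separate claims: the equivalence of derivator $K$-theory spectra, and the non-isomorphism of Waldhausen $K$-groups in specific dimensions. Both hinge on the key algebraic observation that $H^{*}(B) = A$: the DGAs $A$ and $B$ are two distinct enhancements of the same graded cohomology ring, and the task is to show that the derivator-level invariant cannot tell them apart while the Waldhausen-level invariant can.

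For the derivator equivalence $K(\D{D}(\perf A)) \simeq K(\D{D}(\perf B))$, my first step is to exhibit an equivalence of triangulated derivators $\D{D}(\perf A) \simeq \D{D}(\perf B)$; applying derivator $K$-theory then yields the desired homotopy equivalence. Both derivators enhance a common triangulated category $D(\perf A) \simeq D(\perf B)$, the equivalence coming on the $B$-side from passing to cohomology and on the $A$-side from formality of $A$ (trivial differential). The substantive step is to lift this triangulated equivalence to an equivalence at the derivator level, for which I would invoke a rigidity principle: the invertibility of $x$ in degree $-1$ realises $\perf A$ as a periodic avatar of $\perf{\mathbb{F}_p}$, whose derivator enhancement is canonical, and this canonicity then forces the same enhancement onto any other DGA with the same cohomology ring.

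For the Waldhausen $K$-theories, I would compute each side separately. On the $A$-side, multiplication by $x$ is a degree-$(-1)$ isomorphism on every DG $A$-module, so every perfect $A$-module is determined by its degree-zero component; this gives a Waldhausen equivalence $\perf A \simeq \perf{\mathbb{F}_p}$, and Quillen's theorem yields $K_{2n-1}(\perf A) = \mathbb{Z}/(p^{n}-1)$ and $K_{2n}(\perf A) = 0$ for $n\geq 1$. On the $B$-side this shortcut is unavailable, because the relations $d(e)=p$ and $ex+xe=x^{2}$ make $B$ genuinely non-formal over $\mathbb{Z}$ (in particular there is no DGA map $B\to A$, as one checks directly from the defining relations). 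I would instead realize $\perf B$ as Morita equivalent to perfect complexes over a concrete $\mathbb{Z}$-linear model (for instance, a periodified DG-enhancement of $\mathbb{Z}/p^{2}$) and apply known computations (Hesselholt--Madsen and the mod-$p$ $K$-theory literature), or else apply trace methods via topological cyclic homology. The first dimension in which a new $p$-primary class appears in $K_{*}(\perf B)$, and therefore where $K_{i}(\perf A) \ncong K_{i}(\perf B)$, should then match the stated dimensions $i=3$ for $p=2$, $i=5$ for $p=3$, and $i=4$ for $p>3$, these being precisely the places where the integral refinement of $K_{*}(\mathbb{F}_p)$ by a Bockstein-type construction first splits off new $p$-torsion.

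The hardest part will be the derivator equivalence of the second paragraph. Derivator enhancements of a fixed triangulated category are not unique in general, so the uniqueness statement I rely on has to be extracted carefully from the particular periodic structure of $A = \mathbb{F}_p[x^{\pm 1}]$. Once this rigidity is in hand, the derivator $K$-theory equivalence follows formally; the resulting non-equivalence with Waldhausen $K$-theory then quantifies exactly how much more higher-homotopical information Waldhausen $K$-theory retains than its derivator counterpart.
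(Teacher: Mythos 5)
Your proposal rests on two claims that are both false, and the falsity of the first is in fact remarked on explicitly in the paper.

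First, you propose to prove $K(\D{D}(\perf A))\simeq K(\D{D}(\perf B))$ by exhibiting an equivalence of triangulated derivators $\D{D}(\perf A)\simeq\D{D}(\perf B)$ via a ``rigidity principle.'' No such equivalence exists: by Dugger--Shipley the stable model categories underlying $\perf A$ and $\perf B$ are Quillen equivalent to $\C{M}_{\mathbb{F}_p[\varepsilon]/\varepsilon^2}$ and $\C{M}_{\mathbb{Z}/p^2}$ respectively, which have non-isomorphic Waldhausen $K$-theory; by the approximation theorem they are therefore not connected by a zigzag of Quillen equivalences, and by Renaudin's theorem the associated triangulated derivators are not equivalent. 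The whole content of the theorem is that despite this inequivalence, the \emph{part} of the derivator fed into Maltsiniotis's $K$-theory construction, namely the simplicial groupoid $\iso{\ho(S_\bullet\C{W}_R)}$, depends only on the residue field $k=R/(\alpha)$. The paper proves this by an explicit computation: it identifies $\ho(\C{W}_R^{\deltan{n}})$ restricted to the Auslander--Reiten indecomposables with a square-zero extension $D_n\rtimes k_\tau\Gamma_{Q_n}$ of the mesh category, packages these into a simplicial $k$-linear category $\mathcal{D}_\bullet(k)$ depending only on $k$, and shows $\mathcal{D}_\bullet(k)^{\ad}\to\ho(S_\bullet\C{W}_R)$ is a levelwise equivalence. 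There is no softer route through an equivalence of derivators, because that stronger statement is false.

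Second, your computation of $K_*(\perf A)$ is incorrect. You argue that since $x$ is an invertible degree-$(-1)$ element, $\perf A$ is Waldhausen equivalent to $\perf{\mathbb{F}_p}$, yielding Quillen's $K_*(\mathbb{F}_p)$. What the periodicity of $x$ actually gives is an identification of a perfect DG $A$-module with its degree-zero piece \emph{together with the residual differential}, i.e.\ a finite-dimensional $\mathbb{F}_p[\varepsilon]/\varepsilon^2$-module; the quasi-isomorphisms become stable isomorphisms, not isomorphisms. So $\perf A\simeq\C{W}_{\mathbb{F}_p[\varepsilon]/\varepsilon^2}$, whose $K$-theory is the cofiber of $K(\mathbb{F}_p[\varepsilon]/\varepsilon^2)\to K(\mathbb{F}_p)$ in Schlichting's localization fibration, and this is nontrivial already on $\pi_1$. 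Only the \emph{triangulated homotopy category} $\ho(\perf A)$ is equivalent to $\db(\mathbb{F}_p)$; the Waldhausen $K$-theory is not $K_*(\mathbb{F}_p)$, and indeed conflating the two would flatly contradict the point of the theorem, which is that triangulated-equivalent inputs can have different $K$-theories. The paper instead feeds the known computations of $K_*$ of $\mathbb{F}_p[\varepsilon]/\varepsilon^2$, $\mathbb{Z}/p^2$, and $\mathbb{F}_p$ (van der Kallen, Dennis--Stein, Aisbett--Lluis-Puebla--Snaith, Geisser, Angeltveit, Quillen) into the long exact sequences coming from Schlichting's fibration to separate the two sides in the stated dimensions.
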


This is something close, although not actually  a counterexample to Maltsiniotis's original comparison conjecture. Nevertheless, we succeed in showing that the comparison conjecture cannot be true if the localization conjecture holds. The localization conjecture says that any pair of composable pseudo-natural transformations between triangulated derivators $$\D{D}'\To\D{D}\To\D{D}''$$ that yields a short exact sequence of triangulated categories $\D{D}'(I)\r\D{D}(I)\r\D{D}''(I)$ at any finite direct  category $I$, induces a homotopy fibration in $K$-theory spaces, $$K(\D{D}')\To K(\D{D})\To K(\D{D}'').$$

\begin{thm} \label{disagreement}
Derivator $K$-theory does not satisfy both the comparison and localization conjectures. 
\end{thm}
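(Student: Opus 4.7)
The plan is to assume both the comparison and localization conjectures and derive a contradiction from Theorem~\ref{counterexample}. The central claim is that under both hypotheses the comparison map
\[
\mu_R \colon K(\perf{R}) \To K(\D{D}(\perf{R}))
\]
is a weak equivalence for $R \in \{A, B\}$. Granted the claim, Theorem~\ref{counterexample} delivers
\[
K(\perf{A}) \simeq K(\D{D}(\perf{A})) \simeq K(\D{D}(\perf{B})) \simeq K(\perf{B}),
\]
which contradicts the non-isomorphism $K_i(\perf{A}) \ncong K_i(\perf{B})$ asserted there.

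For $R = A$ the comparison conjecture alone suffices. Since the differential on $A$ is trivial, $A$ is a graded algebra and $\perf{A}$ coincides with the Waldhausen category $C^b(\C{E}_A)$ of bounded complexes on the exact category $\C{E}_A$ of finitely generated graded projective $A$-modules, so that $\D{D}(\perf{A}) = \der(\C{E}_A)$. Gillet--Waldhausen gives $K(\perf{A}) \simeq K(\C{E}_A)$, and the comparison conjecture applied to $\C{E}_A$ gives $K(\C{E}_A) \simeq K(\der(\C{E}_A)) = K(\D{D}(\perf{A}))$; hence $\mu_A$ is a weak equivalence.

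For $R = B$ the same direct argument is blocked: $B$ has nontrivial differential, and in fact under comparison the asserted non-isomorphism $K_*(\perf{A}) \ncong K_*(\perf{B})$ precludes $\perf{B}$ from coinciding with $C^b(\C{E})$ for any exact category $\C{E}$. Here the plan is to invoke the localization conjecture by constructing a short exact sequence of triangulated derivators
\[
\der(\C{E}') \To \der(\C{E}) \To \D{D}(\perf{B})
\]
with $\C{E}, \C{E}'$ exact categories, exploiting the Koszul-type presentation of $B$ encoded in the relation $d(e) = p$, which exhibits $B$ as a $\mathbb{Z}$-linear DG resolution attached to the graded ring $A$. Applying the localization conjecture produces a homotopy fibration on derivator $K$-theory; the comparison conjecture applied to $\C{E}$ and $\C{E}'$ reduces it to a fibration on Quillen $K$-theories; and the corresponding Waldhausen localization sequence for $K(\perf{B})$---obtained from Waldhausen's localization theorem and Gillet--Waldhausen---realizes the same fibration. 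Comparison of the two identifies $\mu_B$ as a weak equivalence.

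The main obstacle is the construction of this short exact sequence of derivators for $\perf{B}$ with terms coming from exact categories. Since $B$ is not formal (a fact already reflected in the discrepancy between $K_*(\perf{A})$ and $K_*(\perf{B})$), no formality-based shortcut is available, and the sequence must be extracted honestly from the specific $\mathbb{Z}$-to-$\mathbb{F}_p$ Koszul structure of $B$. The role of the localization conjecture is precisely to transfer the comparison identification from the exact-category layer out to $\perf{B}$, and it is exactly at this transfer that Theorem~\ref{counterexample} forces the inconsistency.
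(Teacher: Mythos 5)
Your plan has an outright error in the $A$ step and an acknowledged but misdirected gap in the $B$ step, and neither can be repaired along the lines you propose. For $A$: the claim that $\perf{A}$ coincides with $C^b(\C{E}_A)$ for an exact category $\C{E}_A$ of graded projectives is false. A DG-module over $A=\mathbb{F}_p[x^{\pm1}]$ carries a \emph{single} $\mathbb{Z}$-grading shared by the $A$-action and the differential, whereas an object of $C^b(\C{E}_A)$ is bigraded; these are genuinely different Waldhausen categories. Indeed, by Dugger--Shipley $\ho(\perf{A})\simeq\smod{\mathbb{F}_p[\varepsilon]/\varepsilon^2}$, whose suspension functor is naturally isomorphic to the identity (since $\Omega k\cong k$ over $\mathbb{F}_p[\varepsilon]/\varepsilon^2$), and no nontrivial $\db(\C{E})$ has trivial shift. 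So Gillet--Waldhausen does not apply and the comparison conjecture alone says nothing about $\mu_A$. For $B$: you correctly see that a short exact sequence of triangulated derivators with exact-category terms is needed, but you do not produce it, and the proposed ``Koszul-type presentation of $B$'' is not the mechanism; the needed sequence is $\der(\proj{R})\to\der(\mod{R})\to\underline{\der}(R)$ with $R=\mathbb{Z}/p^2$, extracted from Rickard's equivalence $\db(\mod{R})/\db(\proj{R})\simeq\smod{R}$ and the conservativity statement of Proposition~\ref{conservative}, after first trading $\D{D}(\perf{B})$ for $\der(\C{W}_{\mathbb{Z}/p^2})$ via Dugger--Shipley (Propositions~\ref{auxiliary1} and~\ref{auxiliary2}). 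You would furthermore have to match this against a Waldhausen-side fibration, which is Schlichting's result rather than an instance of Waldhausen's localization theorem.

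The paper's actual proof is structurally different and more economical: it does not invoke Theorem~\ref{counterexample} and does not attempt to show that $\mu_A$ or $\mu_B$ is an equivalence. Instead, granting both conjectures, it applies localization to the derivator sequence $\der(\proj{R})\to\der(\mod{R})\to\underline{\der}(R)$ and comparison on the first two terms, and then uses the presentations from Corollary~\ref{iso3} (together with d\'evissage $K(\mod{R})\simeq K(\mod{k})$) to identify $K(R)\simeq K(\der(\proj{R}))$ as the homotopy fiber of a map $\Omega|\iso{\mathcal{B}_\bullet(k)^{\ad}}|\to\Omega|\iso{\mathcal{D}_\bullet(k)^{\ad}}|$ that is manifestly built from $k=R/\alpha$ alone. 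Taking $R=\mathbb{F}_p[\varepsilon]/\varepsilon^2$ and $R=\mathbb{Z}/p^2$, both with residue field $\mathbb{F}_p$, and recalling that $K_*(\mathbb{F}_p[\varepsilon]/\varepsilon^2)\ncong K_*(\mathbb{Z}/p^2)$, yields the contradiction directly. In particular the contradiction lives at the level of Quillen $K$-theory of the base rings, not at the level of $K(\perf{A})$ versus $K(\perf{B})$ as in your plan.
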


\section{The comparison homomorphisms}

Let $\C{M}$ be a stable pointed model category \cite{hmc}. An object $X$ in the triangulated category $\C{T}=\ho(\C{M})$ is \emph{compact} if the functor $\C{T}(X,-)$ preserves coproducts. In most cases, the Waldhausen category $\C{W}$ will essentially be the full subcategory of compact cofibrant objects in some $\C{M}$. For any direct category $I$ we equip the diagram category $\C{M}^{I}$ with the model structure where weak equivalences and fibrations are defined levelwise \cite[Theorem 5.1.3]{hmc}.

The poset $\deltan{n}=\{0<1<\cdots<n\}$ is the free category generated by the following Dynkin quiver of type $A_{n+1}$,
\begin{equation*}
Q_n=\quad
0\r 1\r\cdots\r n.
\end{equation*}
A compact cofibrant object in $\C{M}^{\deltan{n}}$ is a sequence of cofibrations between compact cofibrant objects in $\C{M}$,
$$X_{0}\rightarrowtail X_{1}\rightarrowtail\cdots \rightarrowtail X_{n}.$$
Objects in the category $S_{n+1}\C{W}$ are such sequences together with choices of cofibers $X_{ij} = X_{j}/X_{i}$, $0\leq i\leq j\leq n$, such that $X_{ii}=0$ is the distinguished zero object of~$\C{W}$. The morphisms are defined as commutative diagrams in the obvious way. It is clear that $S_{n+1}\C{W}$ is equivalent to the full subcategory of $\C{M}^{\deltan{n}}$ that consists of the compact cofibrant objects, the equivalence being the functor forgetting the choice of cofibers. The categories $\ho(\C{W}^{\deltan{n}})$ and $\ho(S_{n+1} \C{W})$ are also equivalent. For~$n=0$,~$S_{0}\C{W}$ is the category with one object $0$ and one morphism (the identity). These categories assemble to a simplicial Waldhausen category $S_{\bullet}\C{W}$ where $d_{i+1}$ and $s_{i+1}$ are induced by the usual functors $d^{i}\colon \deltan{n-1}\r\deltan{n}$ and $s^{i}\colon \deltan{n+1}\r\deltan{n}$, $0\leq i\leq n$, the degeneracy $s_{0}$ adds $0$ at the beginning of the sequence,
$$0\rightarrowtail X_{0}\rightarrowtail X_{1}\rightarrowtail\cdots \rightarrowtail X_{n},$$
and the face $d_{0}$ uses the choice of cofibers
$$ X_{1}/X_{0}\rightarrowtail\cdots \rightarrowtail X_{n}/X_{0}.$$

Waldhausen's $K$-theory space of $\C{W}$ is defined to be the loop space of the geometric realization of the simplicial subcategory of weak equivalences \cite{akts},
$$K(\C{W})= \Omega|wS_{\bullet}\C{W}|.$$
In most cases of interest, the Waldhausen category $\C{W}$ produces a right pointed derivator $\D{D}(\C{W})$ \cite{ciscd}. The $K$-theory space of $\D{D}(\C{W})$ is naturally weakly equivalent to 
$$K(\D{D}(\C{W}))\simeq \Omega|\iso{\ho (S_{\bullet}\C{W})}|,$$
where $\iso{-}$ denotes the subcategory of isomorphisms, compare \cite{sdckt2,malt}. The natural comparison map \eqref{mun} is induced by the canonical simplicial functor $S_{\bullet}\C{W}\r \ho (S_{\bullet}\C{W})$, which takes weak equivalences to isomorphisms, and therefore gives rise to a map $$\mu\colon \Omega|wS_{\bullet}\C{W}|\To \Omega|\iso{\ho (S_{\bullet}\C{W})}|.$$

\section{The proof of Theorem \ref{counterexample}}

For any commutative Frobenius ring $R$ we denote $\sMod{R}$ the \emph{stable category of $R$-modules}, i.e., the quotient of the category $\Mod{R}$ of $R$-modules by the ideal of morphisms which factor through a projective object. We denote $\C{M}_{R}$ the stable pointed model category structure on $\Mod{R}$ where fibrations are the epimorphisms, cofibrations are the monomorphisms, and weak equivalences are those homomorphisms which project to an isomorphism in $\sMod{R}$ \cite[Theorem 2.2.12]{hmc}. The homotopy category is $\ho(\C{M}_{R})=\sMod{R}$ and the compact objects are essentially the finitely generated $R$-modules. We denote $\C{W}_{R}$ the Waldhausen subcategory of finitely generated $R$-modules, whose underlying category is denoted by $\mod{R}$. Its homotopy category $\ho(\C{W}_{R})=\smod{R}$ is the full subcategory of $\sMod{R}$ spanned by the finitely generated $R$-modules.

In this section we will compute $\ho(\C{W}_{R}^{\deltan{n}})$ for $R$ a commutative noetherian local ring with principal maximal ideal $(\alpha)\neq 0$ such that $\alpha^{2}=0$. We denote $k=R/\alpha$ the residue field. The ring $R$ is Frobenius by \cite[Theorems 15.27 and 16.14]{lmr}, since the socle of $R$ is $(\alpha)\cong k$. The upshot of the computation will be that the category $\ho(\C{W}_{R}^{\deltan{n}})$ is  determined up to equivalence by the field $k$. 

The first important observation, for $n=0$, is that the full inclusion of the category of finite dimensional $k$-vector spaces $\mod{k}\subset \mod{R}$ induced by the unique ring homomorphism $q\colon R\twoheadrightarrow k$ gives rise to an equivalence of categories
$$\mod{k}\st{\sim}\To \smod{R}.$$
For any $n\geq0$, this functor induces a section of the canonical functor $\ho(\C{W}_{R}^{\deltan{n}})\r\ho(\C{W}_{R})^{\deltan{n}}$, since the following composite is an equivalence,
\begin{equation}\label{splitting}
\mod{k}^{\deltan{n}}\subset \mod{R}^{\deltan{n}}=\C{W}_{R}^{\deltan{n}}\To\ho(\C{W}_{R}^{\deltan{n}})\To\ho(\C{W}_{R})^{\deltan{n}}=\smod{R}^{\deltan{n}}.
\end{equation}

\begin{prop}
The additive functor $\ho(\C{W}_{R}^{\deltan{n}})\r\ho(\C{W}_{R})^{\deltan{n}}$ is full, essentially surjective, and reflects isomorphisms. In particular it induces a bijection between the corresponding monoids of isomorphism classes of objects.
\end{prop}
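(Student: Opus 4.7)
The plan is to establish the three properties separately. Essential surjectivity is immediate from \eqref{splitting}: given $\bar{X}\in \ho(\C{W}_R)^{\deltan{n}}=\smod{R}^{\deltan{n}}$, any $Z\in\mod{k}^{\deltan{n}}$ whose image under the composite \eqref{splitting} is isomorphic to $\bar{X}$ provides a preimage when viewed inside $\mod{R}^{\deltan{n}}=\C{W}_R^{\deltan{n}}$.

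Fullness is the main technical point, and I would prove it by rectifying a pseudo-natural transformation to a strict one. Given $X,Y\in\C{W}_R^{\deltan{n}}$ presented as sequences of monomorphisms with structure maps $\iota_i\colon X_i\into X_{i+1}$ and $\iota'_i\colon Y_i\into Y_{i+1}$, and a morphism $\bar{f}=(\bar{f}_0,\dots,\bar{f}_n)$ from the image of $X$ to that of $Y$ in $\smod{R}^{\deltan{n}}$, the goal is to build a strict lift $\tilde{f}\colon X\To Y$ in $\mod{R}^{\deltan{n}}$ by induction on $i$. Start with any lift $\tilde{f}_0$ of $\bar{f}_0$. Having chosen strictly compatible lifts $\tilde{f}_0,\dots,\tilde{f}_i$, choose any lift $\tilde{f}'_{i+1}$ of $\bar{f}_{i+1}$; the defect $\phi_i:=\tilde{f}'_{i+1}\iota_i-\iota'_i\tilde{f}_i\colon X_i\To Y_{i+1}$ is stably zero by the pseudo-commutativity assumption, hence factors through a finitely generated free $R$-module $P_i$ as $X_i\st{\psi_i}\To P_i\st{\chi_i}\To Y_{i+1}$. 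Since $R$ is self-injective and noetherian, every free $R$-module is injective, so $\psi_i$ extends along the monomorphism $\iota_i$ to some $\tilde{\psi}_i\colon X_{i+1}\To P_i$. The corrected map $\tilde{f}_{i+1}:=\tilde{f}'_{i+1}-\chi_i\tilde{\psi}_i$ differs from $\tilde{f}'_{i+1}$ by a map factoring through the projective $P_i$ (hence still lifts $\bar{f}_{i+1}$) and now satisfies the strict commutativity $\tilde{f}_{i+1}\iota_i=\iota'_i\tilde{f}_i$.

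For reflection of isomorphisms, every object of $\C{W}_R^{\deltan{n}}$ is bi-fibrant in the Reedy model structure on $\C{M}_R^{\deltan{n}}$ (cofibrant because it is a sequence of monomorphisms, fibrant because every object of $\C{M}_R$ is fibrant), so morphisms in $\ho(\C{W}_R^{\deltan{n}})$ are represented by genuine diagram morphisms modulo homotopy. If such a representative becomes an isomorphism in $\smod{R}^{\deltan{n}}$, then each of its components is a stable isomorphism, hence a weak equivalence in $\C{M}_R$; thus the representative is a levelwise weak equivalence and becomes invertible in $\ho(\C{W}_R^{\deltan{n}})$. The bijection on monoids of isomorphism classes is then formal: essential surjectivity yields the surjection, while fullness combined with reflection of isomorphisms yields the injection. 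The only non-formal ingredient is the extension step $\psi_i\rightsquigarrow\tilde{\psi}_i$ in the fullness argument, and it is this that crucially uses the Frobenius property of $R$.
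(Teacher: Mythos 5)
Your proof is correct in its essentials but takes a genuinely different route from the paper's. For fullness and essential surjectivity the paper simply cites a general structural result of Cisinski (\cite[Proposition 2.15]{ciscd}), which applies to any suitable derivable category, and handles reflection of isomorphisms in one line via strong saturation of the weak equivalences. You instead give a self-contained, hands-on argument: essential surjectivity is read off from the splitting \eqref{splitting}; fullness is proved by an explicit inductive rectification in which the Frobenius property (finitely generated projective $\Leftrightarrow$ injective) is what lets you extend the correction $\psi_i$ along the cofibration $\iota_i$; and reflection of isomorphisms comes from representing $\ho$-morphisms by genuine diagram maps and using that a stably invertible map is a weak equivalence. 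Your version is more elementary and makes visible exactly where the Frobenius hypothesis enters, at the cost of redoing in this special case what Cisinski's cited result proves in general. One inaccuracy to repair: it is not true that every object of $\C{W}_R^{\deltan{n}}=\mod{R}^{\deltan{n}}$ is a sequence of monomorphisms, so the claim that every object is Reedy-bifibrant is false as stated. In both the fullness step and the reflection-of-isomorphisms step you should first replace $X$ and $Y$ by Reedy-cofibrant resolutions (sequences of cofibrations); this is harmless because the homotopy category is equivalent to that of the full subcategory of such objects, as the paper itself observes in connection with $S_{n+1}\C{W}$, but it needs to be said.
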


\begin{proof}
By \cite[Proposition 2.15]{ciscd} the functor is full and essentially surjective. 
It reflects isomorphisms since weak equivalences in $\C{W}_{R}$ satisfy the strong saturation property.
\end{proof}

The category $\mod{k}^{\deltan{n}}$ is well known  since it is the category of finite-dimensional representations of the quiver $Q_{n}$ over the field $k$. The monoid of isomorphism classes of objects is freely generated by the following set of indecomposable representations,
\begin{align*}
E_{n}&=\{M_{i,j}\; ;\; 0\leq i\leq j\leq n\},\\
M_{i,j}&=\underbrace{0\r\cdots\r 0}_{i}\r \underbrace{k\st{1}\r\cdots\st{1}\r k}_{j-i+1}\r \underbrace{0\r\cdots\r 0}_{n-j}.
\end{align*}

Given a category $\C{C}$ and a set of objects $E$ in $\C{C}$ we denote $\C{C}_{E}\subset\C{C}$ the full subcategory  spanned by the objects in $E$. We now describe $\mod{k}^{\deltan{n}}_{E_{n}}$.
Consider the poset
$$P_{n}=\{(i,j)\; ;\;  0\leq i\leq j\leq n\}\subset \mathbb{Z}\times \mathbb{Z}$$
and the corresponding $k$-linear category $k[P_{n}]$. We refer to \cite{rso} for the basics on the ring and module theory of linear categories. Then 
there is an isomorphism of $k$-linear categories
$$k[P_{n}]^{\op}/((i-1,i-1)\leq (i,i))_{i=1}^{n}\st{\cong}\To\mod{k}^{\deltan{n}}_{E_{n}}$$ 
sending  $(i,j)$ to $M_{i,j}$ and $(i_{1},j_{1})\leq (i_{2},j_{2})$ to the morphism $M_{i_{2},j_{2}}\r M_{i_{1},j_{1}}$ given by
$$\begin{array}{ccccccccccccccccccccccccccccc}
0\hspace{-7pt}&\r\hspace{-7pt}&\cdots\hspace{-7pt}&\r\hspace{-7pt}&0\hspace{-7pt}&\r\hspace{-7pt}&0\hspace{-7pt}&\r\hspace{-7pt}&\cdots\hspace{-7pt}&\r\hspace{-7pt}&0\hspace{-7pt}&\r\hspace{-7pt}&k\hspace{-7pt}&\st{1}\r\hspace{-7pt}&\cdots\hspace{-7pt}&\st{1}\r\hspace{-7pt}&k\hspace{-7pt}&
\st{1}\r\hspace{-7pt}&k\hspace{-7pt}&\st{1}\r\hspace{-7pt}&\cdots\hspace{-7pt}&\st{1}\r\hspace{-7pt}&k\hspace{-7pt}&\r\hspace{-7pt}&0\hspace{-7pt}&\r\hspace{-7pt}&\cdots\hspace{-7pt}&\r\hspace{-7pt}&0\\
\downarrow\hspace{-7pt}&\hspace{-7pt}&\hspace{-7pt}&\hspace{-7pt}& \downarrow\hspace{-7pt}&\hspace{-7pt}& \downarrow\hspace{-7pt}&\hspace{-7pt}&\hspace{-7pt}&\hspace{-7pt}& \downarrow\hspace{-7pt}&\hspace{-7pt}& \hspace{-6pt}\downarrow{\scriptstyle 1}\hspace{-20pt}&\hspace{-7pt}&\hspace{-7pt}&\hspace{-7pt}& \hspace{-6pt}\downarrow{\scriptstyle 1}\hspace{-20pt}& \hspace{-7pt}& \downarrow\hspace{-7pt}&\hspace{-7pt}&\hspace{-7pt}&\hspace{-7pt}& \downarrow\hspace{-7pt}&\hspace{-7pt}& \downarrow\hspace{-7pt}&\hspace{-7pt}&\hspace{-7pt}&\hspace{-7pt}& \downarrow\vspace{-2pt}\\
0\hspace{-7pt}&\r\hspace{-7pt}&\cdots\hspace{-7pt}&\r\hspace{-7pt}&0\hspace{-7pt}&\r\hspace{-7pt}&k\hspace{-7pt}&\st{1}\r\hspace{-7pt}&\cdots\hspace{-7pt}&\st{1}\r\hspace{-7pt}&k\hspace{-7pt}&\st{1}\r\hspace{-7pt}&k\hspace{-7pt}&\st{1}\r\hspace{-7pt}&\cdots\hspace{-7pt}&\st{1}\r\hspace{-7pt}&k\hspace{-7pt}&
\r\hspace{-7pt}&0\hspace{-7pt}&\r\hspace{-7pt}&\cdots\hspace{-7pt}&\r\hspace{-7pt}&0\hspace{-7pt}&\r\hspace{-7pt}&0\hspace{-7pt}&\r\hspace{-7pt}&\cdots\hspace{-7pt}&\r\hspace{-7pt}&0
\end{array}$$
Equivalently, $\mod{k}^{\deltan{n}}_{E_{n}}$ is isomorphic to the $k$-linear category of the following quiver~$\Gamma_{Q_{n}}$
\begin{center}
\includegraphics[scale=.18]{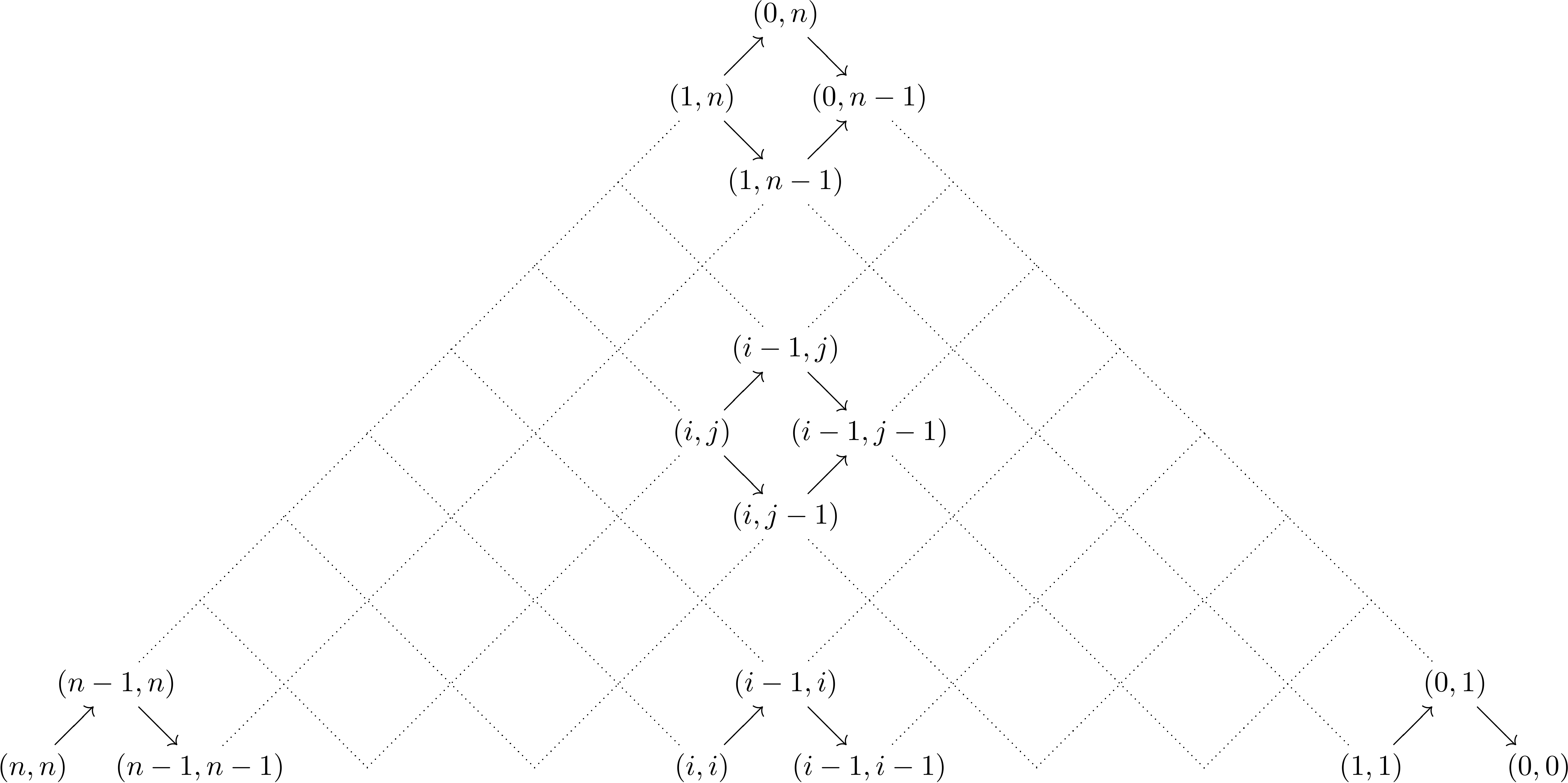}
\end{center}
e.g. for $n=3$ 
$$\Gamma_{Q_{3}}\quad=\quad\begin{array}{c}
\xymatrix@!=5pt{
&&&(0,3)\ar[rd]&\\
&&(1,3)\ar[ru]\ar[rd]&&(0,2)\ar[rd]\\
&(2,3)\ar[rd]\ar[ru]&&(1,2)\ar[ru]\ar[rd]&&(0,1)\ar[rd]\\
(3,3)\ar[ru]&&(2,2)\ar[ru]&&(1,1)\ar[ru]&&(0,0)\\
}\end{array}$$
with relations:
\begin{itemize}
\item The square 
$\begin{array}{c}\xymatrix@!=5pt{
&(i-1,j)\ar[rd]&\\
(i,j)\ar[ru]\ar[rd]&&(i-1,j-1)\\
&(i,j-1)\ar[ru]&}\end{array}$ is commutative for $0< i< j\leq n$,

\item The composite $\begin{array}{c}\xymatrix@!=5pt{
&(i-1,i)\ar[rd]&\\
(i,i)\ar[ru]&&(i-1,i-1)\\
&&}\end{array}$ is trivial for  $0< i\leq n$.
\end{itemize}
This category will be denoted by $k_{\tau}\Gamma_{Q_n}$ since it is (isomorphic to) the \emph{mesh category} of the quiver $\Gamma_{Q_n}$ with translation $\tau(i,j)=(i+1,j+1)$, $0\leq i\leq j< n$, which is the \emph{Auslander-Reiten quiver} of $Q_{n}$, see \cite{csrt}. The arrows $\nearrow$ and $\searrow$ in $\Gamma_{Q_{n}}$ correspond to the following morphisms,
\begin{align}
\label{ne}
\nearrow\quad=&\quad\begin{array}{ccccccccccccccccccccccccccccc}
0\hspace{-7pt}&\r\hspace{-7pt}&\cdots\hspace{-7pt}&\r\hspace{-7pt}&0\hspace{-7pt}&\r\hspace{-7pt}&0\hspace{-7pt}&\r\hspace{-7pt}&k\hspace{-7pt}&\st{1}\r\hspace{-7pt}&\cdots\hspace{-7pt}&\st{1}\r\hspace{-7pt}&k\hspace{-7pt}&\r\hspace{-7pt}&0\hspace{-7pt}&\r\hspace{-7pt}&\cdots\hspace{-7pt}&\r\hspace{-7pt}&0\\
\downarrow\hspace{-7pt}&\hspace{-7pt}&\hspace{-7pt}&\hspace{-7pt}& \downarrow\hspace{-7pt}&\hspace{-7pt}& \downarrow\hspace{-7pt}&\hspace{-7pt}&  \hspace{-6pt}\downarrow{\scriptstyle 1}\hspace{-20pt}&\hspace{-7pt}&\hspace{-7pt}&\hspace{-7pt}& \hspace{-6pt}\downarrow{\scriptstyle 1}\hspace{-20pt}& \hspace{-7pt}& \downarrow\hspace{-7pt}&\hspace{-7pt}&\hspace{-7pt}&\hspace{-7pt}& \downarrow\vspace{-2pt}\\
0\hspace{-7pt}&\r\hspace{-7pt}&\cdots\hspace{-7pt}&\r\hspace{-7pt}&0\hspace{-7pt}&\r\hspace{-7pt}&k\hspace{-7pt}&\st{1}\r\hspace{-7pt}&k\hspace{-7pt}&\st{1}\r\hspace{-7pt}&\cdots\hspace{-7pt}&\st{1}\r\hspace{-7pt}&k\hspace{-7pt}&
\r\hspace{-7pt}&0\hspace{-7pt}&\r\hspace{-7pt}&\cdots\hspace{-7pt}&\r\hspace{-7pt}&0
\end{array}\\
\label{se}
\searrow \quad=&\quad \begin{array}{ccccccccccccccccccccccccccccc}
0\hspace{-7pt}&\r\hspace{-7pt}&\cdots\hspace{-7pt}&\r\hspace{-7pt}&0\hspace{-7pt}&\r\hspace{-7pt}&k\hspace{-7pt}&\st{1}\r\hspace{-7pt}&\cdots\hspace{-7pt}&\st{1}\r\hspace{-7pt}&k\hspace{-7pt}&\st{1}\r\hspace{-7pt}&k\hspace{-7pt}&\r\hspace{-7pt}&0\hspace{-7pt}&\r\hspace{-7pt}&\cdots\hspace{-7pt}&\r\hspace{-7pt}&0\\
\downarrow\hspace{-7pt}&\hspace{-7pt}&\hspace{-7pt}&\hspace{-7pt}& \downarrow\hspace{-7pt}&\hspace{-7pt}&\hspace{-6pt}\downarrow{\scriptstyle 1}\hspace{-20pt}&\hspace{-7pt}&\hspace{-7pt}&\hspace{-7pt}&  \hspace{-6pt}\downarrow{\scriptstyle 1}\hspace{-20pt}&\hspace{-7pt}& \downarrow\hspace{-7pt}&\hspace{-7pt}&\downarrow\hspace{-7pt}&\hspace{-7pt}& \hspace{-7pt}&\hspace{-7pt}&\downarrow\vspace{-2pt}\\
0\hspace{-7pt}&\r\hspace{-7pt}&\cdots\hspace{-7pt}&\r\hspace{-7pt}&0\hspace{-7pt}&\r\hspace{-7pt}&k\hspace{-7pt}&\st{1}\r\hspace{-7pt}&\cdots\hspace{-7pt}&\st{1}\r\hspace{-7pt}&k\hspace{-7pt}&\r\hspace{-7pt}&0\hspace{-7pt}&
\r\hspace{-7pt}&0\hspace{-7pt}&\r\hspace{-7pt}&\cdots\hspace{-7pt}&\r\hspace{-7pt}&0
\end{array}
\end{align}

Recall that given a $k$-linear category $\C{C}$ and a $\C{C}$-bimodule $D$ the \emph{semidirect product} $D\rtimes\C{C}$ is the category with the same objects as $\C{C}$, morphism $k$-modules
$$(D\rtimes\C{C})(X,Y)=D(X,Y)\oplus\C{C}(X,Y),$$
and composition defined by
$$(b,g)(a,f)=(b\cdot f+g\cdot a,gf).$$

Let $D_{n}$ be the $k_{\tau}\Gamma_{Q_n}$-bimodule with generators
$$\varphi_{i}\in D_{n}((0,i-1),(i,n)),\quad 0< i\leq n,$$
and relations
\begin{align*}
((i+1,n)\r(i,n))\cdot\varphi_{i+1}&=\varphi_{i}\cdot((0,i)\r(0,i-1)),\quad 0< i<n;\\
((1,n)\r(0,n))\cdot \varphi_{1}&=0.
\end{align*}
This $k_{\tau}\Gamma_{Q_n}$-bimodule is
$$D_{n}((i_{1},j_{1}),(i_{2},j_{2}))\cong k,\quad i_{1}+1\leq i_{2}\leq j_{1}+1\leq j_{2}, $$
generated by $$((i_{2},n)\r(i_{2}, j_{2}))\cdot\varphi_{i_{2}}\cdot((i_{1},j_{1})\r(0,i_{2}-1)),$$
and zero otherwise. It is a well-known fact from representation theory that, under the isomorphism $k_{\tau}\Gamma_{Q_n}\cong\mod{k}^{\deltan{n}}_{E_{n}}$, we have a bimodule isomorphism $D_{n}\cong\ext^{1}_{\mod{k}^{\deltan{n}}}$ sending $\varphi_{i}$ to the element represented by the following extension
$$
\begin{array}{ccccccccccccccccc}
0\hspace{-7pt}&\r\hspace{-7pt}&\cdots\hspace{-7pt}&\r\hspace{-7pt}& 0\hspace{-7pt}&\r\hspace{-7pt}& k\hspace{-7pt}&\st{1}\r\hspace{-7pt}&\cdots\hspace{-7pt}&\st{1}\r\hspace{-7pt}& k\\
\downarrow\hspace{-7pt}&\hspace{-7pt}&\hspace{-7pt}&\hspace{-7pt}&\downarrow\hspace{-7pt}&\hspace{-7pt}& \hspace{-6pt}\downarrow{\scriptstyle 1}\hspace{-20pt}&\hspace{-7pt}&\hspace{-7pt}&\hspace{-7pt}&\hspace{-6pt}\downarrow{\scriptstyle 1}\hspace{-13pt}\vspace{-3pt}\\
k\hspace{-7pt}&\st{1}\r\hspace{-7pt}&\cdots\hspace{-7pt}&\st{1}\r\hspace{-7pt}& k\hspace{-7pt}&\st{1}\r\hspace{-7pt}& k\hspace{-7pt}&\st{1}\r\hspace{-7pt}&\cdots\hspace{-7pt}&\st{1}\r\hspace{-7pt}& k\\
\hspace{-6pt}\downarrow{\scriptstyle 1}\hspace{-20pt}&\hspace{-7pt}&\hspace{-7pt}&\hspace{-7pt}&\hspace{-6pt}\downarrow{\scriptstyle 1}\hspace{-20pt}&\hspace{-7pt}& \downarrow\hspace{-7pt}&\hspace{-7pt}&\hspace{-7pt}&\hspace{-7pt}&\downarrow\vspace{-3pt}\\
k\hspace{-7pt}&\st{1}\r\hspace{-7pt}&\cdots\hspace{-7pt}&\st{1}\r\hspace{-7pt}& k\hspace{-7pt}&\r\hspace{-7pt}& 0\hspace{-7pt}&\r\hspace{-7pt}&\cdots\hspace{-7pt}&\r\hspace{-7pt}& 0
\end{array}$$
compare \cite{rtaa}, hence
$$D_{n}\rtimes k_{\tau}\Gamma_{Q_n} \cong \ext^{1}_{\mod{k}^{\deltan{n}}}\rtimes \mod{k}^{\deltan{n}}_{E_{n}}.$$

For $0\leq i\leq j\leq n$ we consider the cofibrant replacement $r_{i,j}\colon \tilde{M}_{i,j}\st{\sim}\twoheadrightarrow M_{i,j}$ of $M_{i,j}$ in $\C{W}_{R}^{\deltan{n}}$ given by
$$
\begin{array}{ccccccccccccccccc}
0\hspace{-7pt}&\r\hspace{-7pt}&\cdots\hspace{-7pt}&\r\hspace{-7pt}& 0\hspace{-7pt}&\r\hspace{-7pt}& k\hspace{-7pt}&\st{1}\r\hspace{-7pt}&\cdots\hspace{-7pt}&\st{1}\r\hspace{-7pt}& k\hspace{-7pt}&\st{\alpha}\r\hspace{-7pt}& R\hspace{-7pt}&\st{1}\r\hspace{-7pt}&\cdots\hspace{-7pt}&\st{1}\r\hspace{-7pt}& R\\
\downarrow\hspace{-7pt}&\hspace{-7pt}&\hspace{-7pt}&\hspace{-7pt}&\downarrow\hspace{-7pt}&\hspace{-7pt}& \hspace{-6pt}\downarrow{\scriptstyle 1}\hspace{-20pt}&\hspace{-7pt}&\hspace{-7pt}&\hspace{-7pt}& \hspace{-6pt}\downarrow{\scriptstyle 1}\hspace{-20pt}&\hspace{-7pt}&\downarrow\hspace{-7pt}&\hspace{-7pt}&\hspace{-7pt}&\hspace{-7pt}&\downarrow\vspace{-2pt}\\
0\hspace{-7pt}&\r\hspace{-7pt}&\cdots\hspace{-7pt}&\r\hspace{-7pt}& 0\hspace{-7pt}&\r\hspace{-7pt}& k\hspace{-7pt}&\st{1}\r\hspace{-7pt}&\cdots\hspace{-7pt}&\st{1}\r\hspace{-7pt}& k\hspace{-7pt}&\r\hspace{-7pt}& 0\hspace{-7pt}&\r\hspace{-7pt}&\cdots\hspace{-7pt}&\r\hspace{-7pt}& 0
\end{array}$$
Notice that $r_{i,n}$ is the identity, $0\leq i\leq n$.

\begin{prop}\label{iso1}
There is an isomorphism of categories
$$D_{n}\rtimes k_{\tau}\Gamma_{Q_n}\st{\cong}\To \ho(\C{W}_{R}^{\deltan{n}})_{E_{n}}$$
which is defined on the second factor by the isomorphism $k_{\tau}\Gamma_{Q_n}\cong  \mod{k}^{\deltan{n}}_{E_{n}}$ and the splitting \eqref{splitting}, and on the first factor it sends ${\varphi}_{i}$, $0<i\leq n$, to the inverse of $r_{0,i-1}$ in the homotopy category composed with the morphism $\tilde{\varphi}_{i}\colon \tilde{M}_{0,i-1}\r{M}_{i,n}$ below,
$$
\begin{array}{ccccccccccc}
k\hspace{-7pt}&\st{1}\r\hspace{-7pt}&\cdots\hspace{-7pt}&\st{1}\r\hspace{-7pt}& k\hspace{-7pt}&\st{\alpha}\r\hspace{-7pt}& R\hspace{-7pt}&\st{1}\r\hspace{-7pt}&\cdots\hspace{-7pt}&\st{1}\r\hspace{-7pt}& R\\
\downarrow\hspace{-7pt}&\hspace{-7pt}&\hspace{-7pt}&\hspace{-7pt}&\downarrow\hspace{-7pt}&\hspace{-7pt}&\hspace{-6pt}\downarrow{\scriptstyle q}\hspace{-20pt}&\hspace{-7pt}&\hspace{-7pt}&\hspace{-7pt}&\hspace{-6pt}\downarrow{\scriptstyle q}\hspace{-12pt}\vspace{-2pt}\\
0\hspace{-7pt}&\r\hspace{-7pt}&\cdots\hspace{-7pt}&\r\hspace{-7pt}& 0\hspace{-7pt}&\r\hspace{-7pt}& k\hspace{-7pt}&\st{1}\r\hspace{-7pt}&\cdots\hspace{-7pt}&\st{1}\r\hspace{-7pt}& k
\end{array}$$

\end{prop}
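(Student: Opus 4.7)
The plan is to define $F\colon D_{n}\rtimes k_{\tau}\Gamma_{Q_n}\To\ho(\C{W}_{R}^{\deltan{n}})_{E_{n}}$ exactly as dictated by the statement and to verify it is an isomorphism of categories. On objects, set $F(i,j)=M_{i,j}$; on the $k_{\tau}\Gamma_{Q_n}$-factor apply the composite of the splitting \eqref{splitting} with passage to the homotopy category; on the generators $\varphi_{i}$ of the bimodule $D_{n}$, set $F(\varphi_{i})=[\tilde{\varphi}_{i}]\circ[r_{0,i-1}]^{-1}$, which is meaningful because $r_{0,i-1}$ is a weak equivalence, hence invertible in $\ho(\C{W}_{R}^{\deltan{n}})$. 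Bijectivity on objects is then built into the definition.

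Well-definedness of $F$ as a functor comprises three checks: the two displayed relations among the $\varphi_{i}$, compatibility with the bimodule action of $k_{\tau}\Gamma_{Q_n}$ on $D_{n}$, and the vanishing of composites of two $D_{n}$-generators in the semidirect product. The second relation $((1,n)\r(0,n))\cdot\varphi_{1}=0$ reduces, after inverting $r_{0,0}$, to checking that a specific composite $\tilde{M}_{0,0}\To M_{0,n}$ is null-homotopic in $\mod{R}^{\deltan{n}}$; this composite factors through the subdiagram of $\tilde{M}_{0,0}$ concentrated in positions $1,\ldots,n$, which is a diagram of free $R$-modules and hence projective-injective since $R$ is Frobenius. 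The commutativity relation and the bimodule compatibilities are checked by the same type of argument: two parallel morphisms $\tilde{M}_{?,?}\To \tilde{M}_{?,?}$ arising from different compositions in the semidirect product differ by a map factoring through a free subdiagram of the target. The vanishing $F(b_{2})\circ F(b_{1})=0$ for $b_{1},b_{2}\in D_{n}$ reflects the fact that $\mod{k}^{\deltan{n}}$ is hereditary, so that $\ext^{2}=0$.

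Full faithfulness is reduced to showing that for every pair $((i_{1},j_{1}),(i_{2},j_{2}))$ in $P_{n}$ the induced map on Hom groups is a bijection of $k$-modules. The target group fits into a natural short exact sequence
\[
0\To\ext^{1}_{\mod{k}^{\deltan{n}}}(M_{i_{1},j_{1}},M_{i_{2},j_{2}})\To\ho(\C{W}_{R}^{\deltan{n}})(M_{i_{1},j_{1}},M_{i_{2},j_{2}})\To\mod{k}^{\deltan{n}}(M_{i_{1},j_{1}},M_{i_{2},j_{2}})\To 0,
\]
split by the section \eqref{splitting}. The right-hand term coincides with $k_{\tau}\Gamma_{Q_n}((i_{1},j_{1}),(i_{2},j_{2}))$ via the isomorphism $k_{\tau}\Gamma_{Q_n}\cong\mod{k}^{\deltan{n}}_{E_{n}}$, matching the $k_{\tau}\Gamma_{Q_n}$-part of $F$. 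The kernel is identified with $D_{n}((i_{1},j_{1}),(i_{2},j_{2}))$ via the bimodule isomorphism $D_{n}\cong\ext^{1}_{\mod{k}^{\deltan{n}}}$ recalled just before the proposition, under which $\varphi_{i}$ corresponds to the extension pictured there; this is precisely the class represented by $F(\varphi_{i})=[\tilde{\varphi}_{i}]\circ[r_{0,i-1}]^{-1}$, since $\tilde{M}_{0,i-1}$ together with $\tilde{\varphi}_{i}$ exhibits the pictured extension as the underlying data of the connecting morphism in the stable diagram category.

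The main obstacle is the existence of the short exact sequence above, which encodes the gap between $\ho(\C{W}_{R}^{\deltan{n}})$ and $\ho(\C{W}_{R})^{\deltan{n}}=\smod{R}^{\deltan{n}}$. It is established by an explicit computation of homotopy classes of morphisms $\tilde{M}_{i_{1},j_{1}}\To\tilde{M}_{i_{2},j_{2}}$ in $\mod{R}^{\deltan{n}}$: the morphisms are enumerated positionwise subject to compatibility with the structure maps $\alpha\colon k\into R$, and the null-homotopic ones are identified with those factoring through a diagram of projective (free) $R$-modules. A case analysis on the relative positions of $i_{1}, j_{1}, i_{2}, j_{2}$, using that $R$ is local Frobenius with $\alpha^{2}=0$, yields exactly the two $k$-summands predicted by $D_{n}$ and by $\mod{k}^{\deltan{n}}$, matching the combinatorics tabulated after the description of $\Gamma_{Q_n}$.
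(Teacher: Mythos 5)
Your overall strategy matches the paper's: define $F$ as dictated, use the cofibrant replacements $\tilde{M}_{i,j}$ and the explicit lifts of the mesh-category arrows, compute the kernel of $\ho(\C{W}_{R}^{\deltan{n}})_{E_n}\to\ho(\C{W}_{R})^{\deltan{n}}_{E_n}$ by inspection, and identify it with $D_n\cong\ext^1_{\mod{k}^{\deltan{n}}}$. The short exact sequence you write down is a correct reformulation of the paper's kernel computation, and the final case analysis you allude to is essentially what the paper carries out.

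However, there is a genuine error in the step where you justify $F(b_2)\circ F(b_1)=0$ for $b_1,b_2\in D_n$ by invoking hereditariness of $\mod{k}^{\deltan{n}}$. The composition pairing in question is $I_n(M',M'')\times I_n(M,M')\to I_n(M,M'')$, which under the identification $I_n\cong\ext^1$ is a pairing $\ext^1\times\ext^1\to\ext^1$, not $\ext^1\times\ext^1\to\ext^2$. The target group $\ext^1(M_{i_1,j_1},M_{i_3,j_3})$ is in general nonzero for the relevant indices (this is precisely $D_n((i_1,j_1),(i_3,j_3))\cong k$ when $i_1+1\leq i_3\leq j_1+1\leq j_3$), so vanishing of $\ext^2$ says nothing about whether the composite is zero. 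You would need to show, separately, that this composition pairing agrees with the Yoneda product (which does land in $\ext^2$), and that is not obvious; it is in fact roughly as hard as the direct verification. The paper handles this by an explicit computation: it shows that the intermediate lift $\tilde{\varphi}_{i_3}\tilde{\nu}_{i_2,n;0,i_3-1}$ vanishes on the nose, by writing it out and observing the vertical maps are multiples of $\alpha$ composed with $q$ in the offending positions, so the whole composite vanishes before passing to the homotopy category. You should replace the $\ext^2$ heuristic with this kind of direct check, which fits naturally into the case analysis you already invoke for the short exact sequence.
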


\begin{proof}
The arrows $M_{i,j}\r M_{i-1,j}$ and $M_{i,j}\r M_{i,j-1}$ in \eqref{ne} and \eqref{se} lift to the following two morphisms $\tilde{M}_{i,j}\r \tilde{M}_{i-1,j}$ and $\tilde{M}_{i,j}\r \tilde{M}_{i,j-1}$ between cofibrant replacements, respectively,
\begin{align}
\label{ne2}
\begin{array}{ccccccccccccccccccccccccccccc}
0\hspace{-7pt}&\r\hspace{-7pt}&\cdots\hspace{-7pt}&\r\hspace{-7pt}&0\hspace{-7pt}&\r\hspace{-7pt}&0\hspace{-7pt}&\r\hspace{-7pt}&k\hspace{-7pt}&\st{1}\r\hspace{-7pt}&\cdots\hspace{-7pt}&\st{1}\r\hspace{-7pt}&k\hspace{-7pt}&\st{\alpha}\r\hspace{-7pt}&R\hspace{-7pt}&\st{1}\r\hspace{-7pt}&\cdots\hspace{-7pt}&\st{1}\r\hspace{-7pt}&R\\
\downarrow\hspace{-7pt}&\hspace{-7pt}&\hspace{-7pt}&\hspace{-7pt}& \downarrow\hspace{-7pt}&\hspace{-7pt}& \downarrow\hspace{-7pt}&\hspace{-7pt}&  \hspace{-6pt}\downarrow{\scriptstyle 1}\hspace{-20pt}&\hspace{-7pt}&\hspace{-7pt}&\hspace{-7pt}& \hspace{-6pt}\downarrow{\scriptstyle 1}\hspace{-20pt}& \hspace{-7pt}&\hspace{-6pt}\downarrow{\scriptstyle 1}\hspace{-20pt}&\hspace{-7pt}&\hspace{-7pt}&\hspace{-7pt}& \hspace{-6pt}\downarrow{\scriptstyle 1}\hspace{-13pt}\vspace{-2pt}\\
0\hspace{-7pt}&\r\hspace{-7pt}&\cdots\hspace{-7pt}&\r\hspace{-7pt}&0\hspace{-7pt}&\r\hspace{-7pt}&k\hspace{-7pt}&\st{1}\r\hspace{-7pt}&k\hspace{-7pt}&\st{1}\r\hspace{-7pt}&\cdots\hspace{-7pt}&\st{1}\r\hspace{-7pt}&k\hspace{-7pt}&
\st{\alpha}\r\hspace{-7pt}&R\hspace{-7pt}&\st{1}\r\hspace{-7pt}&\cdots\hspace{-7pt}&\st{1}\r\hspace{-7pt}&R
\end{array}\\
\label{se2}
 \begin{array}{ccccccccccccccccccccccccccccc}
0\hspace{-7pt}&\r\hspace{-7pt}&\cdots\hspace{-7pt}&\r\hspace{-7pt}&0\hspace{-7pt}&\r\hspace{-7pt}&k\hspace{-7pt}&\st{1}\r\hspace{-7pt}&\cdots\hspace{-7pt}&\st{1}\r\hspace{-7pt}&k\hspace{-7pt}&\st{1}\r\hspace{-7pt}&k\hspace{-7pt}&\st{\alpha}\r\hspace{-7pt}&R\hspace{-7pt}&\st{1}\r\hspace{-7pt}&\cdots\hspace{-7pt}&\st{1}\r\hspace{-7pt}&R\\
\downarrow\hspace{-7pt}&\hspace{-7pt}&\hspace{-7pt}&\hspace{-7pt}& \downarrow\hspace{-7pt}&\hspace{-7pt}&\hspace{-6pt}\downarrow{\scriptstyle 1}\hspace{-20pt}&\hspace{-7pt}&\hspace{-7pt}&\hspace{-7pt}&  \hspace{-6pt}\downarrow{\scriptstyle 1}\hspace{-20pt}&\hspace{-7pt}&  \hspace{-6pt}\downarrow{\scriptstyle \alpha}\hspace{-20pt}&\hspace{-7pt}& \hspace{-6pt}\downarrow{\scriptstyle 1}\hspace{-20pt}&\hspace{-7pt}& \hspace{-7pt}&\hspace{-7pt}& \hspace{-6pt}\downarrow{\scriptstyle 1}\hspace{-13pt}\vspace{-2pt}\\
0\hspace{-7pt}&\r\hspace{-7pt}&\cdots\hspace{-7pt}&\r\hspace{-7pt}&0\hspace{-7pt}&\r\hspace{-7pt}&k\hspace{-7pt}&\st{1}\r\hspace{-7pt}&\cdots\hspace{-7pt}&\st{1}\r\hspace{-7pt}&k\hspace{-7pt}&\st{\alpha}\r\hspace{-7pt}&R\hspace{-7pt}&
\st{1}\r\hspace{-7pt}&R\hspace{-7pt}&\st{1}\r\hspace{-7pt}&\cdots\hspace{-7pt}&\st{1}\r\hspace{-7pt}&R
\end{array}
\end{align}
Given $(i_{1},j_{1})\geq (i_{2},j_{2})$ we denote
\begin{align*}
\nu_{i_{1},j_{1};i_{2},j_{2}}&\colon M_{i_{1},j_{1}}\To M_{i_{2},j_{2}},&
\tilde{\nu}_{i_{1},j_{1};i_{2},j_{2}}&\colon \tilde{M}_{i_{1},j_{1}}\To \tilde{M}_{i_{2},j_{2}},
\end{align*}
the  unique composite of \eqref{ne}'s and \eqref{se}'s, and of 
\eqref{ne2}'s and~\eqref{se2}'s, respectively. We have $r_{i_{2},j_{2}}\tilde{\nu}_{i_{1},j_{1};i_{2},j_{2}}={\nu}_{i_{1},j_{1};i_{2},j_{2}}r_{i_{1},j_{1}}$.

One can easily check by inspection that the  epimorphism $r_{i_{1},j_{1}}$ induces the monomorphism
\begin{align*}
\mod{R}^{\deltan{n}}(M_{i_{1},j_{1}},M_{i_{2},j_{2}})&\hookrightarrow
\mod{R}^{\deltan{n}}(\tilde{M}_{i_{1},j_{1}},M_{i_{2},j_{2}})
\end{align*}
given by:
\begin{itemize}
 \item ${k}\hookrightarrow {k}$, an isomorphism, if $i_{2}\leq i_{1}\leq j_{2}\leq j_{1}$;
 \item $0\hookrightarrow {k}$ if $i_2\leq j_{1}+1\leq j_{2}$, with target generated by
\begin{equation}\label{nontrivial}
\nu_{i_{2},n;i_{2},j_{2}}
\tilde{\varphi}_{i_{2}} \tilde{\nu}_{i_{1},j_{1};0,i_{2}-1},
\end{equation}
if $i_{1}+1\leq i_{2}$; 
and   if $i_{1}\geq i_{2}$ by 
\begin{equation}\label{htrivial}
\begin{array}{ccccccccccccccccccccccccccccc}
0\hspace{-7pt}&\r\hspace{-7pt}&\cdots\hspace{-7pt}&\r\hspace{-7pt}&0\hspace{-7pt}&\r\hspace{-7pt}&0\hspace{-7pt}&\r\hspace{-7pt}&\cdots\hspace{-7pt}&\r\hspace{-7pt}&0\hspace{-7pt}&\r\hspace{-7pt}&k\hspace{-7pt}&\st{1}\r\hspace{-7pt}&\cdots\hspace{-7pt}&\st{1}\r\hspace{-7pt}&k\hspace{-7pt}&
\st{\alpha}\r\hspace{-7pt}&R\hspace{-7pt}&\st{1}\r\hspace{-7pt}&\cdots\hspace{-7pt}&\st{1}\r\hspace{-7pt}&R\hspace{-7pt}&\st{1}\r\hspace{-7pt}&R\hspace{-7pt}&\st{1}\r\hspace{-7pt}&\cdots\hspace{-7pt}&\st{1}\r\hspace{-7pt}&R\\
\downarrow\hspace{-7pt}&\hspace{-7pt}&\hspace{-7pt}&\hspace{-7pt}& \downarrow\hspace{-7pt}&\hspace{-7pt}& \downarrow\hspace{-7pt}&\hspace{-7pt}&\hspace{-7pt}&\hspace{-7pt}& \downarrow\hspace{-7pt}&\hspace{-7pt}& \hspace{-6pt}\downarrow{\scriptstyle 0}\hspace{-20pt}&\hspace{-7pt}&\hspace{-7pt}&\hspace{-7pt}& \hspace{-6pt}\downarrow{\scriptstyle 0}\hspace{-20pt}& \hspace{-7pt}& \hspace{-6pt}\downarrow{\scriptstyle q}\hspace{-20pt}&\hspace{-7pt}&\hspace{-7pt}&\hspace{-7pt}& \hspace{-6pt}\downarrow{\scriptstyle q}\hspace{-20pt}&\hspace{-7pt}& \downarrow\hspace{-7pt}&\hspace{-7pt}&\hspace{-7pt}&\hspace{-7pt}& \downarrow\vspace{-2pt}\\
0\hspace{-7pt}&\r\hspace{-7pt}&\cdots\hspace{-7pt}&\r\hspace{-7pt}&0\hspace{-7pt}&\r\hspace{-7pt}&k\hspace{-7pt}&\st{1}\r\hspace{-7pt}&\cdots\hspace{-7pt}&\st{1}\r\hspace{-7pt}&k\hspace{-7pt}&\st{1}\r\hspace{-7pt}&k\hspace{-7pt}&\st{1}\r\hspace{-7pt}&\cdots\hspace{-7pt}&\st{1}\r\hspace{-7pt}&k\hspace{-7pt}&
\st{1}\r\hspace{-7pt}&k\hspace{-7pt}&\st{1}\r\hspace{-7pt}&\cdots\hspace{-7pt}&\st{1}\r\hspace{-7pt}&k\hspace{-7pt}&\r\hspace{-7pt}&0\hspace{-7pt}&\r\hspace{-7pt}&\cdots\hspace{-7pt}&\r\hspace{-7pt}&0
\end{array}
\end{equation}
\item $0\hookrightarrow 0$ otherwise.
 \end{itemize}
The morphism \eqref{htrivial} is trivial in $\ho(\C{W}_{R}^{\deltan{n}})$ since it factors through a contractible object as follows,
\begin{equation*}
\begin{array}{ccccccccccccccccccccccccccccc}
0\hspace{-7pt}&\r\hspace{-7pt}&\cdots\hspace{-7pt}&\r\hspace{-7pt}&0\hspace{-7pt}&\r\hspace{-7pt}&0\hspace{-7pt}&\r\hspace{-7pt}&\cdots\hspace{-7pt}&\r\hspace{-7pt}&0\hspace{-7pt}&\r\hspace{-7pt}&k\hspace{-7pt}&\st{1}\r\hspace{-7pt}&\cdots\hspace{-7pt}&\st{1}\r\hspace{-7pt}&k\hspace{-7pt}&
\st{\alpha}\r\hspace{-7pt}&R\hspace{-7pt}&\st{1}\r\hspace{-7pt}&\cdots\hspace{-7pt}&\st{1}\r\hspace{-7pt}&R\hspace{-7pt}&\st{1}\r\hspace{-7pt}&R\hspace{-7pt}&\st{1}\r\hspace{-7pt}&\cdots\hspace{-7pt}&\st{1}\r\hspace{-7pt}&R\\
\downarrow\hspace{-7pt}&\hspace{-7pt}&\hspace{-7pt}&\hspace{-7pt}& \downarrow\hspace{-7pt}&\hspace{-7pt}& \downarrow\hspace{-7pt}&\hspace{-7pt}&\hspace{-7pt}&\hspace{-7pt}& \downarrow\hspace{-7pt}&\hspace{-7pt}& \hspace{-6pt}\downarrow{\scriptstyle \alpha}\hspace{-20pt}&\hspace{-7pt}&\hspace{-7pt}&\hspace{-7pt}& \hspace{-6pt}\downarrow{\scriptstyle \alpha}\hspace{-20pt}& \hspace{-7pt}& \hspace{-6pt}\downarrow{\scriptstyle 1}\hspace{-20pt}&\hspace{-7pt}&\hspace{-7pt}&\hspace{-7pt}& \hspace{-6pt}\downarrow{\scriptstyle 1}\hspace{-20pt}&\hspace{-7pt}& \hspace{-6pt}\downarrow{\scriptstyle 1}\hspace{-20pt}&\hspace{-7pt}&\hspace{-7pt}&\hspace{-7pt}& \hspace{-6pt}\downarrow{\scriptstyle 1}\hspace{-13pt}\vspace{-2pt}\\
0\hspace{-7pt}&\r\hspace{-7pt}&\cdots\hspace{-7pt}&\r\hspace{-7pt}&0\hspace{-7pt}&\r\hspace{-7pt}&0\hspace{-7pt}&\r\hspace{-7pt}&\cdots\hspace{-7pt}&\r\hspace{-7pt}&0\hspace{-7pt}&\r\hspace{-7pt}&R\hspace{-7pt}&\st{1}\r\hspace{-7pt}&\cdots\hspace{-7pt}&\st{1}\r\hspace{-7pt}&R\hspace{-7pt}&
\st{1}\r\hspace{-7pt}&R\hspace{-7pt}&\st{1}\r\hspace{-7pt}&\cdots\hspace{-7pt}&\st{1}\r\hspace{-7pt}&R\hspace{-7pt}&\st{1}\r\hspace{-7pt}&R\hspace{-7pt}&\st{1}\r\hspace{-7pt}&\cdots\hspace{-7pt}&\st{1}\r\hspace{-7pt}&R\\
\downarrow\hspace{-7pt}&\hspace{-7pt}&\hspace{-7pt}&\hspace{-7pt}& \downarrow\hspace{-7pt}&\hspace{-7pt}& \downarrow\hspace{-7pt}&\hspace{-7pt}&\hspace{-7pt}&\hspace{-7pt}& \downarrow\hspace{-7pt}&\hspace{-7pt}& \hspace{-6pt}\downarrow{\scriptstyle q}\hspace{-20pt}&\hspace{-7pt}&\hspace{-7pt}&\hspace{-7pt}& \hspace{-6pt}\downarrow{\scriptstyle q}\hspace{-20pt}& \hspace{-7pt}& \hspace{-6pt}\downarrow{\scriptstyle q}\hspace{-20pt}&\hspace{-7pt}&\hspace{-7pt}&\hspace{-7pt}& \hspace{-6pt}\downarrow{\scriptstyle q}\hspace{-20pt}&\hspace{-7pt}& \downarrow\hspace{-7pt}&\hspace{-7pt}&\hspace{-7pt}&\hspace{-7pt}& \downarrow\vspace{-2pt}\\
0\hspace{-7pt}&\r\hspace{-7pt}&\cdots\hspace{-7pt}&\r\hspace{-7pt}&0\hspace{-7pt}&\r\hspace{-7pt}&k\hspace{-7pt}&\st{1}\r\hspace{-7pt}&\cdots\hspace{-7pt}&\st{1}\r\hspace{-7pt}&k\hspace{-7pt}&\st{1}\r\hspace{-7pt}&k\hspace{-7pt}&\st{1}\r\hspace{-7pt}&\cdots\hspace{-7pt}&\st{1}\r\hspace{-7pt}&k\hspace{-7pt}&
\st{1}\r\hspace{-7pt}&k\hspace{-7pt}&\st{1}\r\hspace{-7pt}&\cdots\hspace{-7pt}&\st{1}\r\hspace{-7pt}&k\hspace{-7pt}&\r\hspace{-7pt}&0\hspace{-7pt}&\r\hspace{-7pt}&\cdots\hspace{-7pt}&\r\hspace{-7pt}&0
\end{array}
\end{equation*}
On the contrary  \eqref{nontrivial} cannot be trivial in $\ho(\C{W}_{R}^{\deltan{n}})$ since it contains a commutative square of the form
$$\begin{array}{ccc}
k\hspace{-7pt}&\st{\alpha}\r\hspace{-7pt}&R\\
\downarrow\hspace{-7pt}&\hspace{-7pt}& \hspace{-6pt}\downarrow{\scriptstyle q}\hspace{-13pt}\\
0\hspace{-7pt}&\r\hspace{-7pt}&k
\end{array}$$
which induces the non-trivial morphism $1\colon k\r k$ in $\ho(\C{W}_{R})\simeq\mod{k}$ on cofibers (i.e. cokernels) of horizontal arrows (which are cofibrations).

These computations prove that the kernel of $\ho(\C{W}_{R}^{\deltan{n}})_{E_{n}}\r\ho(\C{W}_{R})^{\deltan{n}}_{E_{n}}$  is the ideal $I_{n}$ 
with $$I_{n}(M_{i_{1},j_{1}},M_{i_{2},j_{2}})\cong k,\quad i_{1}+1\leq i_{2}\leq j_{1}+1\leq j_{2}, $$
generated by  
the inverse of $r_{i_{1},j_{1}}$ composed with
\eqref{nontrivial}; and zero otherwise.

In order to prove that the functor in the statement is well defined we only have to check that $I_{n}$ is a square-zero ideal. The only non-trivial cases to check are the composites of  generators $$M_{i_{1},j_{1}}\To M_{i_{2},j_{2}}\To M_{i_{3},j_{3}},\quad i_{1}+3\leq i_{2}+2\leq i_{3}+1\leq j_{1}+2\leq j_{2}+1\leq j_{3}.$$
Such a composite, precomposed with the isomorphism $r_{i_{1},j_{1}}$ in the homotopy category, is
\begin{align*}
&\hspace{-40pt}\nu_{i_{3},n;i_{3},j_{3}}
\tilde{\varphi}_{i_{3}} \tilde{\nu}_{i_{2},j_{2};0,i_{3}-1}
\tilde{\nu}_{i_{2},n;i_{2},j_{2}}
\tilde{\varphi}_{i_{2}} \tilde{\nu}_{i_{1},j_{1};0,i_{2}-1}\\
={}&
\nu_{i_{3},n;i_{3},j_{3}}
\tilde{\varphi}_{i_{3}}
\tilde{\nu}_{i_{2},n;0,i_{3}-1}
\tilde{\varphi}_{i_{2}} \tilde{\nu}_{i_{1},j_{1};0,i_{2}-1}.
\end{align*}
The morphism $\tilde{\nu}_{i_{2},n;0,i_{3}-1}$ is
$$\begin{array}{ccccccccccccccccccccccccccccc}
0\hspace{-7pt}&\r\hspace{-7pt}&\cdots\hspace{-7pt}&\r\hspace{-7pt}&0\hspace{-7pt}&\r\hspace{-7pt}&k\hspace{-7pt}&\st{1}\r\hspace{-7pt}&\cdots\hspace{-7pt}&\st{1}\r\hspace{-7pt}&k\hspace{-7pt}&
\st{1}\r\hspace{-7pt}&k\hspace{-7pt}&\st{1}\r\hspace{-7pt}&\cdots\hspace{-7pt}&\st{1}\r\hspace{-7pt}&k\\
\downarrow\hspace{-7pt}&\hspace{-7pt}&\hspace{-7pt}&\hspace{-7pt}& \downarrow\hspace{-7pt}&\hspace{-7pt}& \hspace{-6pt}\downarrow{\scriptstyle 1}\hspace{-20pt}&\hspace{-7pt}&\hspace{-7pt}&\hspace{-7pt}& \hspace{-6pt}\downarrow{\scriptstyle 1}\hspace{-20pt}& \hspace{-7pt}& \hspace{-6pt}\downarrow{\scriptstyle \alpha}\hspace{-20pt}&\hspace{-7pt}&\hspace{-7pt}&\hspace{-7pt}& \hspace{-6pt}\downarrow{\scriptstyle \alpha}\hspace{-13pt}\vspace{-2pt}\\
k\hspace{-7pt}&\st{1}\r\hspace{-7pt}&\cdots\hspace{-7pt}&\st{1}\r\hspace{-7pt}&k\hspace{-7pt}&\st{1}\r\hspace{-7pt}&k\hspace{-7pt}&\st{1}\r\hspace{-7pt}&\cdots\hspace{-7pt}&\st{1}\r\hspace{-7pt}&k\hspace{-7pt}&
\st{\alpha}\r\hspace{-7pt}&R\hspace{-7pt}&\st{1}\r\hspace{-7pt}&\cdots\hspace{-7pt}&\st{1}\r\hspace{-7pt}&R
\end{array}$$
therefore $\tilde{\varphi}_{i_{3}}
\tilde{\nu}_{i_{2},n;0,i_{3}-1}=0$, and hence also the composite $M_{i_{1},j_{1}}\r M_{i_{3},j_{3}}$.

Now that we have proved that the functor is well defined, the previous computation of $I_{n}$ automatically shows that the functor is actually an isomorphism, see the descriptions of the bimodule $D_{n}$. Moreover, $\ho(\C{W}_{R}^{\deltan{n}})_{E_{n}}\r\ho(\C{W}_{R})^{\deltan{n}}_{E_{n}}$ and its splitting correspond to the projection and the inclusion of the second factor of $D_{n}\rtimes k_{\tau}\Gamma_{Q_n}$, respectively.
\end{proof}

For any $k$-linear category $\C{C}$ we denote $\C{C}_{+}$ the $k$-linear category obtained by formally adding  a zero object $0$. This is the left adjoint of the forgetful functor from $k$-linear categories with zero object to $k$-linear categories.

We define the simplicial $k$-linear category with zero object $\mathcal{D}_{\bullet}(k)$ as $\mathcal{D}_{0}(k)=0$, $\mathcal{D}_{n+1}(k)=(D_{n}\rtimes k_{\tau}\Gamma_{Q_n})_{+}$, $n\geq 0$, faces and degeneracies defined on objects by
\begin{align*}
d_{t+1}(i,j)&=\left\{
\begin{array}{ll}
(i-1,j-1),&0\leq t <i\leq j\leq n;\\
(i,j-1),&0\leq i\leq t \leq j\leq n,\; i<j;\\
0,&0\leq i= t = j\leq n;\\
(i,j),&0\leq i\leq j<t\leq n;\\
\end{array}
\right.\\
d_{0}(i,j)&=
\left\{
\begin{array}{ll}
(j+1,n-1),&0= i\leq j< n;\\
0,&0= i\leq  j= n;\\
(i-1,j-1),&0< i\leq j\leq n;\\
\end{array}
\right.\\
s_{t+1}(i,j)&=\left\{
\begin{array}{ll}
(i+1,j+1),&-1\leq t <i\leq j\leq n;\\
(i,j+1),&0\leq i\leq t \leq j\leq n;\\
(i,j),&0\leq i\leq j<t\leq n;\\
\end{array}
\right.
\end{align*}
on morphisms in $k_{\tau}\Gamma_{Q_n}$ as
\begin{align*}
d_{t}((i_{1},j_{1})\geq (i_{2},j_{2}))&=\left\{
\begin{array}{ll}
0,&\text{if }d_{t}(i_{1},j_{1})=0 \text{ or } d_{t}(i_{2},j_{2})=0;\\
d_{t}(i_{1},j_{1})\geq d_{t}(i_{2},j_{2}),&\text{otherwise};
\end{array}
\right.\\
s_{t}((i_{1},j_{1})\geq (i_{2},j_{2}))&=s_{t}(i_{1},j_{1})\geq s_{t} (i_{2},j_{2});
\end{align*}
and on generators of $D_{n}$ as
\begin{align*}
d_{t+1}(\varphi_{i})&=\left\{
\begin{array}{ll}
0,&t=0,\;i=1;\\
\varphi_{i-1},&0\leq t<i\leq n,\; i>1;\\
\varphi_{i},&0< i\leq t\leq n;
\end{array}
\right.\\
d_{0}(\varphi_{i})&=1_{(i-1,n-1)},\quad 0<i\leq n;\\
s_{t+1}(\varphi_{i})&=\left\{
\begin{array}{ll}
\varphi_{i+1},&0\leq t <i\leq n;\\
\varphi_{i},&0< i\leq t\leq n;
\end{array}
\right.\\
s_{0}(\varphi_{i})&=\varphi_{i+1}\cdot ((1,i)\r(0,i)), \quad 0< i\leq  n.
\end{align*}
We leave the reader the tedious but straightforward task to check that $\mathcal{D}_{\bullet}(k)$ is well defined. Notice that this simplicial category only depends on the field $k$. Actually, it is functorial in $k$. Moreover, the inclusion of the second factors $$\mathcal{E}_{n}(k):=(k_{\tau}\Gamma_{Q_n})_{+} \subset \mathcal{D}_{n}(k)$$ define a simplicial $k$-linear subcategory with zero object $\mathcal{E}_{\bullet}(k)\subset \mathcal{D}_{\bullet}(k)$, however the face~$d_{0}$ does not take all elements in $D_{n}$ to elements in $D_{n-1}$. 

Consider the following set of objects of $S_{n+1}\C{W}_{R}$,
$$\tilde{E}_{n}=\{\tilde{M}_{i,j}\; ;\; 0\leq i\leq j\leq n\},\quad n\geq0;\qquad \tilde{E}_{-1}=\emptyset;$$
with the choices of cofibers indicated in the following short exact sequences,
\begin{align}
\label{choices} 0\hookrightarrow0\twoheadrightarrow0,&&
0\hookrightarrow k\st{1}\twoheadrightarrow k,&&
k\st{1}\hookrightarrow k\twoheadrightarrow 0,\\
\nonumber 0\hookrightarrow R\st{1}\twoheadrightarrow R,&&
k\st{\alpha}\hookrightarrow R\st{q}\twoheadrightarrow k,&&
R\st{1}\hookrightarrow R\twoheadrightarrow 0.
\end{align}
Note that the full inclusions $\ho(S_{n}\C{W}_{R})_{\tilde{E}_{n-1}\cup\{0\}}\subset \ho(S_{n}\C{W}_{R})$ define a  simplicial subcategory with zero object $\ho(S_{\bullet}\C{W}_{R})_{\tilde{E}_{\bullet-1}\cup\{0\}}\subset \ho(S_{\bullet}\C{W}_{R})$.

\begin{lem}\label{iso2}
There is a unique isomorphism $$\xi(R,\alpha)\colon \mathcal{D}_{\bullet}(k)\cong \ho(S_{\bullet}\C{W}_{R})_{\tilde{E}_{\bullet-1}}$$ such that the cofibrant replacements $r_{i,j}\colon \tilde{M}_{i,j}\twoheadrightarrow M_{i,j}$ define natural isomorphisms indicated with double arrow in the following diagram, $0\leq i\leq j\leq n$,
$$\xymatrix@C=40pt{\mathcal{D}_{n+1}(k)\ar[r]^-{\xi(R,\alpha)_{n+1}}_-{\cong}_<(.5){\begin{array}{c}\vspace{5pt}\end{array}}="a"\ar[d]_-{\eqref{iso1}}^-{\cong}
&\ho(S_{n+1}\C{W}_{R})_{\tilde{E}_{n}\cup\{0\}}\ar[d]^-{\text{full inclusion}}\\
\ho(\C{W}_{R}^{\deltan{n}})_{E_{n}\cup\{0\}}\ar[r]_-{\text{full inclusion}}^<(.1){
\begin{array}{c}\vspace{5pt}\end{array}}="b"&\ho(\C{W}_{R}^{\deltan{n}})
\ar@{=>}"a";"b"^-{\scriptscriptstyle\cong}}$$
\end{lem}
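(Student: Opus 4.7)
The plan is to assemble the level-wise isomorphisms of Proposition \ref{iso1} into a simplicial isomorphism. At each degree $n+1$, Proposition \ref{iso1} provides an isomorphism $D_n\rtimes k_\tau\Gamma_{Q_n}\cong\ho(\C{W}_R^{\deltan{n}})_{E_n}$. Adjoining a zero object on both sides and transferring along the equivalence between $S_{n+1}\C{W}_R$ and the full subcategory of compact cofibrant objects of $\C{W}_R^{\deltan{n}}$, using the cofibrant replacements $r_{i,j}\colon\tilde{M}_{i,j}\onto M_{i,j}$ to supply the natural isomorphism, yields
$$\xi(R,\alpha)_{n+1}\colon\mathcal{D}_{n+1}(k)\st{\cong}\To\ho(S_{n+1}\C{W}_R)_{\tilde{E}_n\cup\{0\}}$$
sending $(i,j)\mapsto\tilde{M}_{i,j}$ and $0\mapsto 0$ on objects, and sending a morphism $f\colon(i_1,j_1)\r(i_2,j_2)$ to the unique $\tilde{f}\colon\tilde{M}_{i_1,j_1}\r\tilde{M}_{i_2,j_2}$ in $\ho(S_{n+1}\C{W}_R)$ satisfying $r_{i_2,j_2}\tilde{f}=\bar{f}\,r_{i_1,j_1}$ in $\ho(\C{W}_R^{\deltan{n}})$, where $\bar{f}$ is the image of $f$ under Proposition \ref{iso1}. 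The prescribed natural isomorphism in the diagram of the lemma forces this definition on both objects and morphisms, delivering existence and uniqueness simultaneously at each simplicial level.

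The remaining task is to check that $\{\xi(R,\alpha)_{n+1}\}_{n\geq 0}$ respects the simplicial structure. For the degeneracies $s_{t+1}$ and faces $d_{t+1}$ with $t\geq 0$, which are induced by the ordinal maps $s^t,d^t\colon\deltan{n\pm 1}\r\deltan{n}$, the induced action on any $\tilde{M}_{i,j}\in\tilde{E}_n$ simply inserts an identity at position $t$ or deletes position $t$ and identifies. A direct case analysis on the position of $t$ relative to the transitions at $i$ and $j+1$ of $\tilde{M}_{i,j}$ reproduces the combinatorial formulas for $s_{t+1}$ and $d_{t+1}$ on $\mathcal{D}_\bullet(k)$, both on objects and on morphisms, including the generators $\varphi_i$ and the arrows \eqref{ne2}, \eqref{se2}, using the shape of $\tilde{\varphi}_i$ recorded in Proposition \ref{iso1}.

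The delicate case is $d_0$, which depends on the choices of cofibers collected in \eqref{choices}. For $i>0$ one has $X_0=0$ at the zeroth spot of $\tilde{M}_{i,j}$, so $d_0$ simply shifts indices and matches $d_0(i,j)=(i-1,j-1)$. For $i=0$, the cofiber sequences $k\st{1}\into k\onto 0$ and $k\st{\alpha}\into R\st{q}\onto k$ of \eqref{choices} force the cofibers $X_p/X_0$ to vanish for $p\leq j$ and to equal $k$ for $p>j$, producing an object of $\tilde{E}_{n-1}$ of the shape prescribed by the combinatorial formula for $d_0(0,j)$; the boundary case $j=n$ collapses everything to $0$. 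On morphisms one computes the induced maps on cofibers of \eqref{ne2}, \eqref{se2}, and of the map $\tilde{\varphi}_i$ appearing in Proposition \ref{iso1}, and then verifies agreement with the combinatorial action of $d_0$ on $D_n\rtimes k_\tau\Gamma_{Q_n}$. The main obstacle is this patient bookkeeping for $d_0$; once it is carried out, the level-wise uniqueness established above automatically delivers the required simplicial isomorphism.
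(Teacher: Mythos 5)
Your outline supplies the verification the paper leaves to the reader: the paper's own proof is the one-line remark that the lemma is ``just an observation'' which motivated the definition of $\mathcal{D}_{\bullet}(k)$, with the well-definedness of $\mathcal{D}_{\bullet}(k)$ explicitly deferred as ``a tedious but straightforward task''. So you take the same route as the paper intends (transfer Proposition \ref{iso1} along the $r_{i,j}$, then check compatibility with faces and degeneracies, with $d_{0}$ the only face that sees the cofiber choices), and your proposal is correct at the level of detail it gives.

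Two points deserve to be made explicit when you actually carry out the ``direct case analysis''. First, the faces and degeneracies of $\ho(S_{\bullet}\C{W}_{R})$ do not land in the set $\tilde{E}_{\bullet-1}\cup\{0\}$ on the nose: for instance $d_{t+1}\tilde{M}_{t,t}$ is the weakly contractible diagram $0\r\cdots\r 0\r R\r\cdots\r R$, which is not the distinguished zero object, so matching it with $d_{t+1}(t,t)=0$ in $\mathcal{D}_{n}(k)$ requires a further canonical weak equivalence. This is precisely the data the ``natural isomorphism'' clause of the lemma is there to record, but your phrase ``deletes position $t$ and identifies'' quietly absorbs it. Second, if you actually perform the $d_{0}$ computation you sketch ($X_{p}/X_{0}=0$ for $p\leq j$ and $X_{p}/X_{0}=k$ for $p>j$, then reindex by one), you obtain $d_{0}\tilde{M}_{0,j}\cong\tilde{M}_{j,n-1}$, not $\tilde{M}_{j+1,n-1}$; this is the value forced by $d_{0}(\varphi_{i})=1_{(i-1,n-1)}$ (since $\varphi_{i}\in D_{n}((0,i-1),(i,n))$ and $d_{0}(i,n)=(i-1,n-1)$) and it flags an off-by-one in the printed formula for $d_{0}(0,j)$ in the definition of $\mathcal{D}_{\bullet}(k)$. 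Writing that the output has ``the shape prescribed by the combinatorial formula'' takes that formula on trust, whereas verifying it is exactly the nontrivial content of the lemma.
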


This lemma is just an observation. Actually,  this observation  motivated our definition of $\mathcal{D}_{\bullet}(k)$.

The \emph{additivization} of a $k$-linear category $\C{C}$ is the category $\C{C}^{\ad}$ whose objects are sequences $(C_1,\dots,C_n)$ 
of finite length $n\geq 0$ of  objects  in $\C{C}$, and whose morphisms are matrices,
$$\C{C}^{\ad}((C_i)_{i=1}^n,(C_j')_{j=1}^m)=\!\!\!\!\!\bigoplus_{
\begin{array}{c}
\vspace{-15pt}
\\
\scriptstyle 1\leq i\leq n
\vspace{-3pt}
\\
\scriptstyle 1\leq j\leq m
\end{array}
}\!\!\!\!\!\C{C}(C_i,C_j').$$
This is an additive $k$-linear category with  zero object $()$ the empty sequence (of legth $n=0$).
The additivization is the  left adjoint (in the weak $2$-categorical sense) of the forgetful $2$-functor from small additive $k$-linear categories to small $k$-linear linear categories (weak because the counit is given by the choice of direct sums $C_{1}\oplus\cdots\oplus C_{n}$ and zero object $0$ in $\C{C}$). Moreover, if $E$ is a set of generators of the monoid of isomorphism classes of objects of $\C{C}$ then an adjoint
$\C{C}^{\ad}_E\st{\sim}\r\C{C}$ of the full inclusion
$\C{C}_E\subset\C{C}$ is an equivalence.

\begin{cor} \label{iso3}
The isomorphism $\xi(R,\alpha)$ in Lemma \ref{iso2}  induces a simplicial $k$-linear functor $$\tilde{\xi}(R,\alpha)\colon \mathcal{D}_{\bullet}(k)^{\ad}\st{\sim}\To \ho(S_{\bullet}\C{W}_{R})$$ which is a levelwise equivalence. In particular, restricting to isomorphisms we obtain a simplicial functor which is a levelwise equivalence,
$$\iso{\tilde{\xi}(R,\alpha)}\colon \iso{\mathcal{D}_{\bullet}(k)^{\ad}}\st{\sim}\To \iso{\ho(S_{\bullet}\C{W}_{R})}.$$
\end{cor}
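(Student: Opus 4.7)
The plan is to obtain $\tilde\xi(R,\alpha)$ by applying the additivization $(-)^{\ad}$ levelwise to the simplicial isomorphism $\xi(R,\alpha)$ of Lemma \ref{iso2} and then using the universal property of $(-)^{\ad}$ to land in the additive target $\ho(S_\bullet\C{W}_R)$. At each level $n\geq 1$, the category $\ho(S_n\C{W}_R)\simeq\ho(\C{W}_R^{\deltan{n-1}})=\smod{R}^{\deltan{n-1}}$ is a $k$-linear additive category with chosen direct sums and zero object $0$; for $n=0$ it is zero. Composing $\xi(R,\alpha)_n$ with the full inclusion $\ho(S_n\C{W}_R)_{\tilde E_{n-1}\cup\{0\}}\subset\ho(S_n\C{W}_R)$ and applying the adjunction recalled before the corollary extends it uniquely to a $k$-linear additive functor $\tilde\xi(R,\alpha)_n\colon\mathcal{D}_n(k)^{\ad}\To\ho(S_n\C{W}_R)$.

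The simplicial compatibility is automatic: additivization is a $2$-functor, and the faces and degeneracies of $\ho(S_\bullet\C{W}_R)$ are additive $k$-linear functors (they are induced by functors between diagram categories and therefore commute with the chosen biproducts up to the canonical isomorphisms), so the maps $\tilde\xi(R,\alpha)_n$ assemble into a simplicial $k$-linear functor.

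For the levelwise equivalence claim I would invoke the criterion stated in the excerpt just before Corollary \ref{iso3}: if a set of objects $E$ generates the monoid of isomorphism classes of a $k$-linear category $\C{C}$ with chosen biproducts, then the canonical additive functor $\C{C}_E^{\ad}\To\C{C}$ is an equivalence. Applying this with $\C{C}=\ho(S_n\C{W}_R)$ and $E=\tilde E_{n-1}\cup\{0\}$, the functor $\tilde\xi(R,\alpha)_n$ is identified with the composition of the isomorphism $\mathcal{D}_n(k)^{\ad}\cong (\ho(S_n\C{W}_R)_{\tilde E_{n-1}\cup\{0\}})^{\ad}$ obtained from Lemma \ref{iso2} with the additive adjoint of the full inclusion into $\ho(S_n\C{W}_R)$, hence is an equivalence.

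The only real point to check is that $\tilde E_{n-1}\cup\{0\}$ really does generate the monoid of iso classes of $\ho(S_n\C{W}_R)$; this is the main obstacle and it is where the earlier work is used. By Proposition \ref{iso1} combined with the proposition preceding it, this monoid is bijective to that of $\smod{R}^{\deltan{n-1}}$, which via the equivalence $\mod{k}\stackrel{\sim}{\To}\smod{R}$ is bijective to the monoid of iso classes of $\mod{k}^{\deltan{n-1}}$; the latter is freely generated by the indecomposables $M_{i,j}$ of the Dynkin quiver $Q_{n-1}$, and the cofibrant replacements $\tilde M_{i,j}\in\tilde E_{n-1}$ are taken to these under the bijection. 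Once the levelwise equivalence is established, the statement about isomorphisms is immediate, because any equivalence of categories restricts to an equivalence between the maximal subgroupoids $\iso{-}$.
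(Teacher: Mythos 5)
Your proof is correct and follows essentially the same route the paper intends: the corollary is an immediate consequence of Lemma \ref{iso2} together with the paragraph on additivization preceding it, and you have simply made explicit the application of the universal property of $(-)^{\ad}$, the verification of simplicial compatibility, and the fact that $\tilde E_{n-1}\cup\{0\}$ generates the monoid of isomorphism classes of $\ho(S_n\C{W}_R)$ (via the unnumbered proposition, Proposition \ref{iso1}, and the weak equivalences $r_{i,j}\colon\tilde M_{i,j}\to M_{i,j}$). No gaps.
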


Now we are ready to prove Theorem \ref{counterexample}.

\begin{proof}[Proof of Theorem \ref{counterexample}]
Let $p$ be a prime. 
The rings $\mathbb{F}_{p}[\varepsilon]/\varepsilon^{2}$ and $\mathbb{Z}/p^{2}$ are commutative noetherian local with non-trivial maximal ideal generated by $\alpha=\varepsilon$ and $p$, respectively, and such that $\alpha^{2}=0$, therefore they satisfy the assumptions of this section.

Dugger and Shipley proved that $\C{M}_{\mathbb{F}_{p}[\varepsilon]/\varepsilon^{2}}$ and $\C{M}_{\mathbb{Z}/p^{2}}$ are Quillen equivalent to the categories $\Mod{A}$ and $\Mod{B}$ of right modules over the differential graded algebras defined in the introduction, respectively, where weak equivalences are quasi-isomorphisms and fibrations are levelwise surjections   \cite[Corollary 4.4]{curious} (we here use cohomological degree, unlike Dugger and Shipley). This, together with the approximation theorem in $K$-theory (e.g. see \cite[Th\'eor\`eme 2.15]{ikted}, \cite[Corollary 3.10]{ktde}) shows that we can replace everywhere $\perf{A}$ and $\perf{B}$ with  $\C{W}_{\mathbb{F}_{p}[\varepsilon]/\varepsilon^{2}}$ and $\C{W}_{\mathbb{Z}/p^{2}}$, respectively.

Now the equivalence in the statement  of the theorem is obtained by taking the loop space of the geometric realization of the  second map in the previous corollary. The last claim for $p>3$ is a computation made in \cite[1.6]{kttc}, see also \cite[Remark 4.9]{curious}. 

For $p=2$ we can apply the computations $K_{3}(\mathbb{F}_{2})\cong\mathbb{Z}/3$, $K_{2}(\mathbb{F}_{2})=0$ \cite[Theorem 8]{otcktglgoff}, $K_{2}(\mathbb{F}_{2}[\varepsilon]/\varepsilon^{2})=0$ \cite{k2nd}, and $K_{2}(\mathbb{Z}/4)\cong\mathbb{Z}/2$ \cite[Corollary 4.4]{k2dvr} to the long exact sequences arising from the homotopy fibration in \cite[1.5]{kttc} in order to obtain exact sequences,
$$\begin{array}{cc}
\mathbb{Z}/3\r K_{3}(\C{W}_{\mathbb{F}_{2}[\varepsilon]/\varepsilon^{2}})\r 0,\qquad&
\mathbb{Z}/3\r K_{3}(\C{W}_{\mathbb{Z}/4})\r \mathbb{Z}/2\r 0.
\end{array}$$
Therefore $K_{3}(\C{W}_{\mathbb{Z}/4})$ contains a cyclic group of order $2$ unlike $K_{3}(\C{W}_{\mathbb{F}_{2}[\varepsilon]/\varepsilon^{2}})$, so they cannot be isomorphic.

For $p=3$ we  apply  the calculations $K_{4}(\mathbb{F}_{3}[\varepsilon]/\varepsilon^{2})=0$  and $K_{4}(\mathbb{Z}/9)\cong\mathbb{Z}/3$ in \cite{otaktozpn} together with the well known $K_{5}(\mathbb{F}_{3})\cong\mathbb{Z}/26$ and $K_{4}(\mathbb{F}_{3})=0$ from \cite[Theorem 8]{otcktglgoff}. In this case we obtain the following exact sequences,
$$\begin{array}{cc}
\mathbb{Z}/26\r K_{5}(\C{W}_{\mathbb{F}_{3}[\varepsilon]/\varepsilon^{2}})\r 0,\qquad&
\mathbb{Z}/26\r K_{5}(\C{W}_{\mathbb{Z}/9})\r \mathbb{Z}/3\r 0.
\end{array}$$
Hence $K_{5}(\C{W}_{\mathbb{Z}/9})$ contains a cyclic group of order $3$ and $K_{5}(\C{W}_{\mathbb{F}_{2}[\varepsilon]/\varepsilon^{2}})$ does not, so they are not isomorphic.

\end{proof}


\begin{rem}
Note that the approximation theorem in $K$-theory implies that the stable model categories $\C{M}_{\mathbb{F}_{p}[\varepsilon]/\varepsilon^{2}}$ and $\C{M}_{\mathbb{Z}/p^{2}}$ cannot 
be connected by a zigzag of Quillen equivalences. By a result of Renaudin \cite[Th\'eor\`eme 3.3.2]{pcthqd}, it follows that the associated triangulated derivators are not equivalent. However, by Corollary \ref{iso3}, their difference cannot be detected on the part that is involved in the definition of derivator $K$-theory.
\end{rem}

\section{The proof of Theorem \ref{disagreement}}

Given a commutative noetherian ring $R$, let $\mod{R}$ be the abelian (in particular exact) category of finitely generated  $R$-modules and $j\colon \proj{R} \to \mod{R}$ the inclusion of the full exact subcategory of projectives. There is an induced morphism between the associated triangulated derivators,
\begin{equation*}
 \der(j) \colon \D{D}(\proj{R}) \To \D{D}(\mod{R}),
\end{equation*}
which is given for every $I$ in $\mathbf{Dir}_{f}$ by the inclusion of a thick subcategory,
\begin{equation*}
 \der(j)(I) \colon \db(\proj{R}^{I^{\op}}) \To \db(\mod{R}^{I^{\op}}).
\end{equation*}
The pointwise Verdier quotients define a prederivator,
\begin{align*}
\underline{\der}(R) \colon \mathbf{Dir}_{f}^{\op} &\To \mathbf{Cat}, \\ 
I&\; \mapsto \db(\mod{R}^{I^{\op}})/ \db(\proj{R}^{I^{\op}}).
\end{align*}
Let $e$ denote the terminal category and, given $A$ in  $\mathbf{Dir}_{f}$ and an object $a$ in $A$, let  $i_{A,a}\colon e \to A$ be the inclusion of $a$. 

\begin{prop} \label{conservative}
For every finite direct  category $A$, the collection of functors $i^*_{A,a}\colon \underline{\der}(R)(A) \to \underline{\der}(R)(e)$, $a$ in $A$, is conservative, i.e., a morphism $f$ in $\underline{\der}(R)(A)$ is an isomorphism if and only if $i^*_{A,a}(f)$ is an isomorphism for every $a$ in $A$.
\end{prop}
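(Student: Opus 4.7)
The ``only if'' direction is clear: each $i^*_{A,a}$ is induced by the exact evaluation functor at $a$, which sends projective-valued diagrams to projectives, hence descends to a triangulated functor of Verdier quotients. For the converse, a morphism in $\underline{\der}(R)(A)$ is invertible precisely when its mapping cone, computed in $\db(\mod{R}^{A^{\op}})$, lies in the thick subcategory generated by $\db(\proj{R}^{A^{\op}})$. So it suffices to prove the following pointwise-detection statement: an object $X\in \db(\mod{R}^{A^{\op}})$ lies in $\db(\proj{R}^{A^{\op}})$ whenever $X(a)\in\db(\proj{R})$ for every $a$ in $A$.

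I plan to prove this statement by induction on the number of objects of $A$. The base case $|A|=1$ is tautological. For the inductive step, pick a maximal object $a_0$ in $A$ (which exists because $A$ is finite direct) and set $A' = A\setminus \{a_0\}$, the full subcategory on the remaining objects. Since $a_0$ is maximal in $A$, no non-identity morphism of $A^{\op}$ has target $a_0$. This ensures that the skyscraper functor $\tau\colon\mod{R}\to\mod{R}^{A^{\op}}$ with $\tau(V)(a_0)=V$ and $\tau(V)(a)=0$ for $a\neq a_0$, and the extension-by-zero functor $\iota\colon\mod{R}^{A'^{\op}}\to\mod{R}^{A^{\op}}$ with $\iota(Y)(a_0)=0$, are both well-defined, exact, and preserve projectivity; the transitions out of $a_0$ are forced to be zero, so no nontrivial compatibility needs to be checked.

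Representing $X$ by a bounded chain complex $X^{\bullet}$, the evident short exact sequence
\[
0 \To \iota(X^{\bullet}|_{A'^{\op}}) \To X^{\bullet} \To \tau(X^{\bullet}(a_0)) \To 0
\]
in $\mod{R}^{A^{\op}}$ gives a distinguished triangle in $\db(\mod{R}^{A^{\op}})$. The right-hand term belongs to $\db(\proj{R}^{A^{\op}})$ because $X(a_0)$ is perfect and $\tau$ preserves quasi-isomorphisms and projectivity. The left-hand term is pointwise perfect, so by the inductive hypothesis $X^{\bullet}|_{A'^{\op}}$ lies in $\db(\proj{R}^{A'^{\op}})$; applying $\iota$ places $\iota(X^{\bullet}|_{A'^{\op}})$ in $\db(\proj{R}^{A^{\op}})$. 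Triangulated closure then forces $X^{\bullet}$ into $\db(\proj{R}^{A^{\op}})$ as well.

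The main delicacy I anticipate is verifying that $\iota$ and $\tau$ are exact functors of diagram categories (and not merely of pointwise values), which rests precisely on the minimality of $a_0$ in $A^{\op}$; the rest is formal triangulated bookkeeping.
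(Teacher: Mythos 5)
Your proof is correct, and it takes a genuinely different route from the paper's. You argue by induction on the number of objects of $A$, peeling off a degree-maximal object $a_0$ and decomposing a diagram $X^{\bullet}$ via the short exact sequence $0 \to \iota(X^{\bullet}|_{A'^{\op}}) \to X^{\bullet} \to \tau(X^{\bullet}(a_0)) \to 0$; since $a_0$ is a source in $A^{\op}$, both $\tau$ (skyscraper) and $\iota$ (extension by zero) are well-defined exact functors carrying projective-valued diagrams to projective-valued diagrams, and thick closure of $\db(\proj{R}^{A^{\op}})$ in $\db(\mod{R}^{A^{\op}})$ finishes the inductive step. The paper instead proceeds globally: it takes a cofibrant resolution of the cone $\tilde Z$ by a diagram $\tilde Q$ of bounded-above complexes of finitely generated projectives (using the derivable-category structure on $C^{-}(\mod R)^{A^{\op}}$), chooses a uniform bound $N$ below which each pointwise perfect model $P_a$ vanishes, and invokes an auxiliary lemma — that $\ker d^{m+1}$ in a bounded-above projective complex quasi-isomorphic to one concentrated above $N$ is projective for $m<N$ — to replace $\tilde Q$ by a functorial bounded truncation valued in projectives. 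Your argument avoids the auxiliary lemma, the cofibrant-resolution machinery, and the need for a uniform truncation bound, trading them for the combinatorics of finite direct categories; it is the more elementary of the two. The paper's argument, by contrast, does not require decomposing $A$ at all and would also apply if one only knew a global bound on the pointwise perfect models without inducting on the shape. Both are complete proofs of the proposition.
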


In the proof of this proposition we will use the following lemma.

\begin{lem}
Let $P^{*}$ be a bounded above complex of projective $R$-modules  which is quasi-isomorphic to a bounded   complex of projective $R$-modules $Q^{*}$ with $Q^{m}=0$ for $m\leq N$. Then $\im[d^{m}\colon P^{m-1}\r P^{m}]=\ker[d^{m+1}\colon P^{m}\r P^{m+1}]$ is a projective $R$-module for all $m<N$.
\end{lem}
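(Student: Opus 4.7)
The plan is to use the standard fact that any quasi-isomorphism between bounded above complexes of projectives is a chain homotopy equivalence, and to extract from the resulting chain homotopy an explicit splitting of the short exact sequence of cycles.

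Since $H^{m}(P^{*})\cong H^{m}(Q^{*})=0$ for every $m\leq N$ (the latter because $Q^{m}=Q^{m-1}=0$ in that range), the complex $P^{*}$ is exact at each term $P^{m}$ with $m\leq N$. Writing $K^{m}=\ker[d^{m+1}\colon P^{m}\r P^{m+1}]$, one obtains for every such $m$ a short exact sequence
\[
0\To K^{m-1}\To P^{m-1}\xrightarrow{d^{m}}K^{m}\To 0,
\]
and it suffices to show that this splits. For this, fix chain maps $f\colon P^{*}\To Q^{*}$ and $g\colon Q^{*}\To P^{*}$ that are mutually inverse up to chain homotopy, together with a chain homotopy $\{h_{m}\colon P^{m}\r P^{m-1}\}$ satisfying
\[
\mathrm{id}_{P^{m}}-g^{m}f^{m}=d^{m}\circ h_{m}+h_{m+1}\circ d^{m+1}.
\]
Because $Q^{m}=0$ for $m\leq N$, the component $f^{m}$ vanishes in this range, and the identity above reduces to $\mathrm{id}_{P^{m}}=d^{m}\circ h_{m}+h_{m+1}\circ d^{m+1}$. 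Evaluating on $x\in K^{m}$ and using $d^{m+1}(x)=0$ gives $x=d^{m}(h_{m}(x))$, so $h_{m}$ restricts to a section of the surjection $d^{m}\colon P^{m-1}\onto K^{m}$. Hence $K^{m}$ is a direct summand of the projective module $P^{m-1}$, and therefore projective, for every $m\leq N$; this is even slightly stronger than the conclusion stated.

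The only ingredient requiring any genuine argument is the existence of the chain homotopy equivalence $P^{*}\simeq Q^{*}$, and this follows from the classical observation that the mapping cone of a quasi-isomorphism between bounded above complexes of projectives is an acyclic bounded above complex of projectives, hence contractible. No step of the proof presents a real obstacle beyond this well-known fact.
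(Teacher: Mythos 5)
The proof is correct and in fact gives the slightly stronger range $m\leq N$. Your route differs from the paper's: the paper realizes the quasi-isomorphism as an actual map $f\colon Q^{*}\to P^{*}$ (both complexes being fibrant and cofibrant in the projective model structure on $C(\Mod{R})$), forms the mapping cone $C_{f}^{*}$, notes it is a bounded above acyclic complex of projectives and hence contractible, so that $\im d^{m}=\ker d^{m+1}$ is a projective summand of $C_{f}^{m}$ for every $m$, and then observes that $C_{f}^{m}$ coincides with $P^{m}$, differential included, for $m<N$. You instead pass to a chain homotopy $h$ witnessing $P^{*}\simeq Q^{*}$ and use the vanishing of $f^{m}$ in degrees $m\leq N$ to turn $h_{m}$ into an explicit section of $d^{m}\colon P^{m-1}\onto K^{m}$. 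Both arguments ultimately hinge on the same fact (the cone of a quasi-isomorphism of bounded above complexes of projectives is contractible); yours is more explicit about the splitting and picks up the case $m=N$ as a bonus, while the paper's version avoids unwinding the homotopy. One small point: your concluding justification tacitly assumes the hypothesis supplies an honest quasi-isomorphism in one direction rather than a zigzag, whereas the paper addresses this explicitly by invoking the fibrant-cofibrant status of $P^{*}$ and $Q^{*}$ in Hovey's model structure to produce a genuine weak equivalence $Q^{*}\to P^{*}$.
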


\begin{proof}
Both $P^{*}$ and $Q^{*}$ are fibrant and cofibrant in the model category $C(\Mod{R})$ of complexes of $R$-modules \cite[Lemma 2.3.6 and Theorem 2.3.11]{hmc}, therefore there exists a quasi-isomorphism $f\colon Q^{*}\st{\sim}\r P^{*}$. The mapping cone $C^{*}_{f}$ of $f$ is a bounded above acyclic complex of projective $R$-modules, therefore $C^{*}_{f}$ is contractible \cite[Lemma 2.3.8]{hmc}, in particular 
$$\im[d^{m}\colon C_{f}^{m-1}\r C_{f}^{m}]=\ker[d^{m+1}\colon C_{f}^{m}\r C_{f}^{m+1}]$$ is a projective $R$-module for all $m\in \mathbb{Z}$. The complex $C_{f}^{*}$ coincides with $P^{*}$ in degrees $m<N$, hence we are done.
\end{proof}

We can now prove the proposition.

\begin{proof}[Proof of Proposition \ref{conservative}]
Let $f\colon X \to Y$ be a morphism in $\underline{\der}(R)(A)$ such that $i^*_{A,a}(f)$ is an isomorphism for all objects $a$ in $A$. Let $$\tilde{f}\colon \tilde{X} \To \tilde{Y}$$ be a morphism in $\der(\mod{R})(A)$ whose image under the localization functor $$S\colon \der(\mod{R})(A) \To \underline{\der}(R)(A)$$ is isomorphic to~$f$. Since $S$ is a triangulated functor, the distinguished triangle
\begin{displaymath}
 \tilde{X} \st{\tilde{f}}{\To} \tilde{Y} \To \tilde{Z} \To \tilde{X}[1]
\end{displaymath}
in $\der(\mod{R}^{A^{\op}})$ maps to a distinguished triangle isomorphic to 
\begin{displaymath}
X \st{f}{\To} Y \To Z \To X[1]
\end{displaymath}
and so, by the assumption, there is an object $P_a$ in $\db(\proj{R})$ and an isomorphism $g_a \colon P_a \cong i^*_{A,a}(\tilde{Z})$ for every object $a$ in $A$. Let $N$ be an integer such that $P_a^m=0$ for every $m \leq N$ and $a$ in $A$. 

We consider the closed model category, in the sense of Quillen \cite{ha}, of bounded above complexes $C^{-}(\mod{R})$ of finitely generated $R$-modules, where the weak equivalences are the quasi-isomorphisms and the fibrations are the levelwise surjective morphisms. In the derivable category $C^{-}(\mod{R})^{A^{\op}}$ \cite{ciscd}, 
there is a cofibrant resolution given by a pointwise quasi-isomorphism $\tilde{Q} \st{\sim}{\to} \tilde{Z}$ where $\tilde{Q}$ is a diagram of bounded above complexes of finitely generated projective $R$-modules. By the previous lemma, replacing the complex
\begin{displaymath}
\tilde{Q}_a = \; \cdots \r  \tilde{Q}_a^{N-3}\st{d^{N-2}}\To\tilde{Q}_a^{N-2}\st{d^{N-1}}\To \tilde{Q}_a^{N-1} \st{d^{N}}\to \cdots \to \tilde{Q}_a^{n-1} \st{d^n}{\to} \tilde{Q}_a^{n} \to \cdots \to 0 \to \cdots 
\end{displaymath}
with its functorial truncation
\begin{displaymath}
\tau_{\geq N-1} \tilde{Q}_a =\;  \cdots \to 0 \to \ker(d^{N}) \to \tilde{Q}_a^{N-1} \st{d^{N}}\to \cdots \r\tilde{Q}_a^{n-1} \st{d^n}{\to} \tilde{Q}_a^{n}\r\cdots \to 0 \to \cdots 
\end{displaymath}
gives a diagram of bounded complexes of finitely generated projective $R$-modules such that the inclusion $\tau_{\geq N-1} \tilde{Q} \st{\sim}\to \tilde{Q}$ is a pointwise quasi-isomorphism. Hence $\tilde{Z}$ is isomorphic in $\db(\mod{R}^{A^{\op}})$ to an object in $\db(\proj{R}^{A^{\op}})$, so $S(\tilde{f})$ is an isomorphism, as required to show. 
\end{proof}


Let $\tilde{\C{W}}_{R}$ be the Waldhausen category with cylinders with underlying category $C^{b}(\mod{R})$ where  cofibrations are the monomorphisms and  weak equivalences are the morphisms whose mapping cone is quasi-isomorphic to a bounded complex of finitely generated projective $R$-modules. That is precisely the strongly saturated class of morphisms that map into isomorphisms under the functor $\cb(\mod{R}) \to \db(\mod{R})/\db(\proj{R})$.

\begin{prop} \label{auxiliary1}
There is an isomorphism of prederivators 
$\der(\tilde{\C{W}}_{R}) \cong \underline{\der}(R)$. In particular, $\underline{\der}(R)$ is a triangulated derivator.
\end{prop}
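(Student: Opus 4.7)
The plan is to realize both prederivators as localizations of the common underlying $1$-category $C^b(\mod{R}^{I^{\op}}) = C^b(\mod{R})^{I^{\op}}$ at two classes of morphisms, and then to show these classes coincide. By construction of the derivator associated to a Waldhausen category with cylinders and a strongly saturated class of weak equivalences, the value $\der(\tilde{\C{W}}_R)(I) = \ho(\tilde{\C{W}}_R^{I^{\op}})$ is the Gabriel--Zisman localization of $C^b(\mod{R}^{I^{\op}})$ at the class $\we_I$ of pointwise weak equivalences, i.e., maps $f$ such that for every $a \in I$ the cone of $i^*_{I,a}(f)$ belongs to $\db(\proj{R})$. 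On the other hand, factoring the projection $C^b(\mod{R}^{I^{\op}}) \to \underline{\der}(R)(I)$ through the standard localization $C^b(\mod{R}^{I^{\op}}) \to \db(\mod{R}^{I^{\op}})$ exhibits $\underline{\der}(R)(I)$ as the localization of $C^b(\mod{R}^{I^{\op}})$ at the class $\Sigma_I$ of morphisms whose cone, viewed in $\db(\mod{R}^{I^{\op}})$, lies in $\db(\proj{R}^{I^{\op}})$.

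It then suffices to prove $\we_I = \Sigma_I$. The inclusion $\Sigma_I \subseteq \we_I$ is immediate, since each restriction functor $i^*_{I,a}$ carries $\db(\proj{R}^{I^{\op}})$ into $\db(\proj{R})$. The reverse inclusion is the real work, and is essentially the global-from-pointwise argument already carried out inside the proof of Proposition \ref{conservative}: given $f \in \we_I$ with cone $\tilde{Z} \in \db(\mod{R}^{I^{\op}})$, finiteness of $I$ provides a common integer $N$ such that every $i^*_{I,a}(\tilde{Z})$ is represented by a bounded complex of projectives concentrated in degrees $\geq N$. Choosing a cofibrant resolution $\tilde{Q} \st{\sim}\to \tilde{Z}$ in the derivable category $C^{-}(\mod{R})^{I^{\op}}$ by a diagram of bounded above complexes of finitely generated projectives, the functorial good truncation $\tau_{\geq N-1}\tilde{Q}$ is, by the lemma preceding Proposition \ref{conservative} applied objectwise, a diagram of bounded complexes of projectives quasi-isomorphic to $\tilde{Z}$. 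Hence $\tilde{Z} \in \db(\proj{R}^{I^{\op}})$ and $f \in \Sigma_I$.

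The resulting isomorphism $\der(\tilde{\C{W}}_R)(I) \cong \underline{\der}(R)(I)$ is compatible with the pullback functors $u^*$ along any $u\colon I \to J$ in $\mathbf{Dir}_f$, since both sides arise as localizations of the same assignment $I \mapsto C^b(\mod{R}^{I^{\op}})$ at classes of morphisms manifestly preserved by every $u^*$. This upgrades the pointwise isomorphism to one of prederivators. For the \emph{in particular} clause, $\der(\tilde{\C{W}}_R)$ is a right pointed derivator by \cite{ciscd}, and its values are now identified with Verdier quotients $\db(\mod{R}^{I^{\op}})/\db(\proj{R}^{I^{\op}})$, which are triangulated with invertible suspension; this promotes $\underline{\der}(R)$ to a triangulated derivator. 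The main obstacle throughout is the global-from-pointwise step in the proof of $\we_I \subseteq \Sigma_I$, and it is precisely the finiteness of $I \in \mathbf{Dir}_f$ that supplies the uniform lower bound $N$ needed to make the single-object truncation lemma extend to the whole diagram.
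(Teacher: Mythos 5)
Your plan—realize both $\ho(\tilde{\C{W}}_R^{I^{\op}})$ and $\db(\mod R^{I^{\op}})/\db(\proj R^{I^{\op}})$ as Gabriel--Zisman localizations of the common category $C^b(\mod R^{I^{\op}})$, and then prove the two classes of inverted arrows coincide—is a valid repackaging of the paper's proof, which instead constructs mutually inverse functors directly by feeding universal properties in alternating directions. The two are close, but you should note two points where your version is doing slightly more work or gliding over a small step. First, the assertion that the composite $C^b(\mod R^{I^{\op}}) \to \db(\mod R^{I^{\op}}) \to \db(\mod R^{I^{\op}})/\db(\proj R^{I^{\op}})$ exhibits the target as the localization of $C^b$ at the class $\Sigma_I$ is not free: it is a ``localizations compose'' statement that in practice one checks by writing every arrow of $\db(\mod R^{I^{\op}})$ as a roof $L(f)L(s)^{-1}$ with $s$ a quasi-isomorphism and $f$ a chain map, so that inverting $\Sigma_I$ forces inversion of the corresponding arrow. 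The paper sidesteps this by factoring step by step with the universal property of the derived category and then of the Verdier quotient, so there is no single ``is a localization of $C^b$'' claim to justify. Second, for the inclusion $\we_I \subseteq \Sigma_I$ you reproduce the truncation argument already recorded in the proof of Proposition~\ref{conservative}; the paper instead invokes Proposition~\ref{conservative} as a black box: a pointwise weak equivalence is pointwise inverted in $\underline{\der}(R)(e)$, hence (by conservativity) its image in $\underline{\der}(R)(I)$ is an isomorphism, whence its cone lies in the thick kernel $\db(\proj R^{I^{\op}})$. Citing the proposition is cleaner and avoids the redundancy; your unfolding is correct but duplicates work the paper has already structured away. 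Naturality in $I$ and the ``in particular'' clause you treat the same way the paper does.
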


\begin{proof}
Let $I$ be a  finite direct category. The universal property of localizations implies that the dotted arrows can be uniquely filled in the following commutative diagram
\begin{displaymath}
 \xymatrix{
\cb(\mod{R}^{I^{\op}}) \ar[r] \ar[d] & \db(\mod{R}^{I^{\op}}) \ar[r] \ar@{.>}[dl] & \db(\mod{R}^{I^{\op}})/\db(\proj{R}^{I^{\op}}). \ar@{.>}[dll] \\
\ho(\tilde{\C{W}}_{R}^{I^{\op}}) & &
}
\end{displaymath}
Similarly in the following diagram, using Proposition \ref{conservative},
\begin{displaymath}
 \xymatrix{
\cb(\mod{R}^{I^{\op}}) \ar[r] \ar[d] & \db(\mod{R}^{I^{\op}})/\db(\proj{R}^{I^{\op}})  \\
\ho(\tilde{\C{W}}_{R}^{I^{\op}}) \ar@{.>}[ur] & 
}
\end{displaymath}
By the universal property, the functors
\begin{equation*} 
\ho(\tilde{\C{W}}_{R}^{I^{\op}}) \rightleftarrows \db(\mod{R}^{I^{\op}})/\db(\proj{R}^{I^{\op}})
\end{equation*}
are mutually inverse isomorphisms, natural in $I$, hence  the result follows.
\end{proof}

\begin{prop} \label{auxiliary2}
Let $R$ be a commutative Frobenius ring. The full inclusion  $G \colon \C{W}_{R}=\mod{R} \to \cb(\mod{R})=\tilde{\C{W}}_{R}$ of complexes concentrated in degree $0$ induces an equivalence of triangulated derivators 
$$\der(G) \colon \der(\C{W}_{R}) \st{\sim}{\To} \der(\tilde{\C{W}}_{R}).$$
\end{prop}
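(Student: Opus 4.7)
By Proposition \ref{auxiliary1}, the target derivator is isomorphic to $\underline{\der}(R)$, whose value at a finite direct $I$ is $\db(\mod{R}^{I^{\op}})/\db(\proj{R}^{I^{\op}})$. The plan is thus to show that, for every such $I$, the functor
\[
\der(G)(I) \colon \ho(\C{W}_R^{I^{\op}}) \To \db(\mod{R}^{I^{\op}})/\db(\proj{R}^{I^{\op}}),
\]
induced by placing a diagram of finitely generated modules in cohomological degree zero, is an equivalence of triangulated categories, naturally in $I$.

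First I would handle the case $I=e$. Here the source is $\smod{R}$, and Rickard's theorem for self-injective rings gives a triangle equivalence $\smod{R}\simeq \db(\mod{R})/\db(\proj{R})$ implemented by $M\mapsto M[0]$. This uses the fact that, since $R$ is self-injective, every finitely generated $R$-module of finite projective dimension is projective, so that the thick subcategory generated by $\proj{R}$ inside $\db(\mod{R})$ coincides with $\db(\proj{R})$.

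To pass to general $I$, I would identify $\ho(\C{W}_R^{I^{\op}})$ with the stable category of a Frobenius exact structure on $\mod{R}^{I^{\op}}$ whose projective-injectives are the pointwise projective diagrams, and then apply Keller's extension of Rickard's theorem for Frobenius exact categories. The resulting equivalence is, by inspection, implemented by $\der(G)(I)$. Naturality in $I$ follows since all of these constructions (the embedding in degree zero, the stable category of a Frobenius category, and Keller's equivalence) are compatible with the change-of-diagram functors, and since both prederivators are already triangulated derivators, we obtain an equivalence of triangulated derivators.

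The main obstacle is the identification, for general $I$, of $\ho(\C{W}_R^{I^{\op}})$ with the stable category of an appropriate Frobenius structure on $\mod{R}^{I^{\op}}$. One has to verify that, in the levelwise model structure on $\C{M}_R^{I^{\op}}$, cofibrant-fibrant replacements of levelwise finitely generated diagrams can be chosen to be pointwise projective, and that morphisms factoring through such diagrams are precisely those annihilated by the localization.
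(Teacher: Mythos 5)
The paper does not argue diagram-by-diagram at all. After using Rickard's theorem \cite[Theorem 2.1]{dcse} to see that $\ho(G)\colon\smod{R}\to\db(\mod{R})/\db(\proj{R})$ is an equivalence (this part you have right), the paper simply observes that $G$ is a right exact functor between strongly saturated right derivable categories and invokes Cisinski's general theorem \cite[Th\'eor\`eme 3.19]{ciscd}: a right exact functor of derivable categories inducing an equivalence on homotopy categories induces an equivalence of the associated derivators. That single black box does all the ``propagation to diagram categories'' work that you are trying to carry out by hand, and it sidesteps precisely the technical difficulties you run into.

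Your proposed route for general $I$ has a genuine gap, and you have in fact put your finger on it yourself while understating how serious it is. First, $\mod{R}^{I^{\op}}$ with the \emph{pointwise} projectives as projective-injectives is not a Frobenius exact category whose stable category is $\ho(\C{W}_R^{I^{\op}})$: the passage to the homotopy category of the levelwise model structure on $\C{M}_R^{I^{\op}}$ goes through Reedy-cofibrant replacement, and the relevant contractible objects are the Reedy-cofibrant levelwise projective diagrams, not arbitrary levelwise projective ones; moreover two morphisms that become equal in $\ho(\C{W}_R^{I^{\op}})$ need not differ by something factoring through a pointwise projective object of $\mod{R}^{I^{\op}}$, since one must first replace the source cofibrantly. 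Second, even if you produce a suitable Frobenius exact structure $\mathcal{E}$ on some subcategory of $\mod{R}^{I^{\op}}$, a ``Keller-type'' Rickard theorem identifies $\underline{\mathcal{E}}$ with a singularity category built from $\mathcal{E}$, not with the specific Verdier quotient $\db(\mod{R}^{I^{\op}})/\db(\proj{R}^{I^{\op}})$ formed from the \emph{abelian} exact structure on $\mod{R}^{I^{\op}}$. Rickard's theorem gives this identification only for self-injective algebras, and the category algebra of a nontrivial finite direct category over $R$ (e.g.\ upper triangular $2\times 2$ matrices over $R$) is not self-injective, so there is no direct module-theoretic Rickard statement available at the diagram level. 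Closing this gap essentially amounts to re-proving a diagram-level version of Proposition \ref{auxiliary1} together with Rickard's theorem, which is considerably more work than the paper's appeal to Cisinski's theorem.
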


\begin{proof}
By \cite[Theorem 2.1]{dcse}, $G$ induces an equivalence
\begin{equation*}
 \ho(G) \colon \ho(\C{W}_{R})=\smod{R} \st{\sim}{\To} \db(\mod{R})/\db(\proj{R})\cong\ho(\tilde{\C{W}}_{R}).
\end{equation*}
The functor $G$ is a right exact functor between strongly saturated right derivable categories, therefore by \cite[Th\'eor\`em 3.19]{ciscd} it induces an equivalence $\der(G)$ of triangulated derivators.
\end{proof}

We are now ready to prove our second theorem.

\begin{proof}[Proof of Theorem \ref{disagreement}]
As in the previous section, let $R$ be a commutative noetherian local ring with principal maximal ideal $(\alpha)\neq 0$ such that $\alpha^{2}=0$,  $k=R/\alpha$ the residue field, and $q\colon R\twoheadrightarrow k$ the natural projection.  We consider the full simplicial subcategory  $\mathcal{B}_{\bullet}(k)\subset \mathcal{E}_{\bullet}(k)$ spanned by the zero object and $(i,n)$, $0\leq i\leq n$. The following set $E_{n}'$ is a basis of the monoid of isomorphism classes of objects in $S_{n+1}(\mod{k})$,
\begin{align*}
E'_{n}&=\{M_{i,n}\; ;\; 0\leq i\leq n\},\quad n\geq0, \quad E'_{-1}= \emptyset, \\ 
M_{i,n}&=\underbrace{0\r\cdots\r 0}_{i}\r \underbrace{k \st{1}\r\cdots\st{1}\r 
k}_{n-i+1},
\end{align*}
with choices of cofibers as in \eqref{choices}. 
There is defined an isomorphism of simplicial categories preserving the zero object 
$\mathcal{B}_{\bullet}(k)\cong S_{\bullet}(\mod{k})_{E'_{\bullet-1}\cup\{0\}} \colon (i,n)\mapsto M_{i,n}$ sending $(i,n)\r (i-1,n)$ to 
$$\begin{array}{ccccccccccccccccccccccccccccc}
0\hspace{-7pt}&\r\hspace{-7pt}&\cdots\hspace{-7pt}&\r\hspace{-7pt}&0\hspace{-7pt}&\r\hspace{-7pt}&0\hspace{-7pt}&\r\hspace{-7pt}&k\hspace{-7pt}&\st{1}\r\hspace{-7pt}&\cdots\hspace{-7pt}&\st{1}\r\hspace{-7pt}&\hspace{-7pt}k\\
\downarrow\hspace{-7pt}&\hspace{-7pt}&\hspace{-7pt}&\hspace{-7pt}& \downarrow\hspace{-7pt}&\hspace{-7pt}& \downarrow\hspace{-7pt}&\hspace{-7pt}&  \hspace{-6pt}\downarrow{\scriptstyle 1}\hspace{-20pt}&\hspace{-7pt}&\hspace{-7pt}&\hspace{-7pt}& \downarrow{\scriptstyle 1}\vspace{-2pt}\\
0\hspace{-7pt}&\r\hspace{-7pt}&\cdots\hspace{-7pt}&\r\hspace{-7pt}&0\hspace{-7pt}&\r\hspace{-7pt}&k\hspace{-7pt}&\st{1}\r\hspace{-7pt}&k\hspace{-7pt}&\st{1}\r\hspace{-7pt}&\cdots\hspace{-7pt}&\st{1}\r\hspace{-7pt}&\hspace{-7pt}k
\end{array}$$
This isomorphism defines a levelwise equivalence of simplicial categories
$\mathcal{B}_{\bullet}(k)^{\ad}\st{\sim}\r S_{\bullet}(\mod{k})$
fitting into a commutative diagram
$$\xymatrix@C=25pt{\mathcal{B}_{\bullet}(k)^{\ad}\ar[rr]^-{\sim}\ar@{^{(}->}[d]&& S_{\bullet}(\mod{k})\ar[d]_{\text{induced by }q}\\
\mathcal{E}_{\bullet}(k)^{\ad}\ar@{^{(}->}[d]&& S_{\bullet}(\mod{R})\ar[d]_{\text{canonical}}
\ar[rr]^-{
\begin{array}{c}
\text{\scriptsize complexes}\vspace{-3pt}\\ \text{\scriptsize  concentrated}\vspace{-3pt}\\ \text{\scriptsize  in degree $0$}
\end{array}}& 
&\ho(S_{\bullet}(C^{b}(\mod{R})))\ar[d]\\
\mathcal{D}_{\bullet}(k)^{\ad}\ar[rr]^-{\sim}_-{\tilde{\xi}(R,\alpha)}&& \ho(S_{\bullet}(\C{W}_{R}))\ar[rr]^-{\sim}_{\text{Proposition \ref{auxiliary2}}}&&\ho(S_{\bullet}(\tilde{\C{W}}_{R}))}$$
Considering the subcategories of isomorphisms and taking the loop space of the geometric realizations in this diagram, we obtain the following commutative diagram,
$$\xymatrix@C=45pt{\Omega|\iso{\mathcal{B}_{\bullet}(k)^{\ad}}|\ar[r]^-{\sim}\ar@{^{(}->}[dd]& K(\mod{k})\ar[d]^{\text{d\'evissage}}_{\sim}\\
& K(\mod{R})\ar[d]
\ar[r]^-{\rho}& K(\der(\mod{R}))\ar[d]\\
\Omega|\iso{\mathcal{D}_{\bullet}(k)^{\ad}}|\ar[r]^-{\sim}& K(\der(\C{W}_{R}))\ar[r]^-{\sim}_-{\text{Props. \ref{auxiliary1} and \ref{auxiliary2}}}&K(\underline{\der}(R))}$$
Assume that derivator $K$-theory satisfies both the localization and comparison conjectures. Then $\rho$ is an equivalence and the short exact sequence of triangulated derivators $$\der(\proj{R}) \To \der(\mod{R}) \To \underline{\der}(R)$$ induces a homotopy fibration of $K$-theory spaces
\begin{equation*}
K(\der(\proj{R})) \To K(\der(\mod{R})) \To K(\underline{\der}(R)),
\end{equation*}
therefore $K(R)=K(\proj{R})\st{\rho}\simeq K(\der(\proj{R}))$ is the homotopy fiber of the map $$\Omega|\iso{\mathcal{B}_{\bullet}(k)^{\ad}}|\rightarrow \Omega|\iso{\mathcal{D}_{\bullet}(k)^{\ad}}|,$$ which is independent of $R$ (it only depends on $k$), but this contradicts the fact that, for any prime number $p$,  $\mathbb{F}_{p}[\varepsilon]/\varepsilon^{2}$ and $\mathbb{Z}/p^{2}$ have inequivalent Quillen $K$-theories. More precisely, if $p>3$,
\begin{align*}
K_3(\mathbb{F}_{p}[\varepsilon]/\varepsilon^{2})&\cong \mathbb{Z}/p\oplus \mathbb{Z}/p \oplus\mathbb{Z}/(p^{2}-1)\\&\ncong  \mathbb{Z}/p^{2} \oplus\mathbb{Z}/(p^{2}-1)\cong K_3(\mathbb{Z}/p^{2}),& \text{\cite{allps, k3wvff}},\\
 K_4(\mathbb{F}_{3}[\varepsilon]/\varepsilon^{2})\cong 0 &\ncong \mathbb{Z}/3\cong K_4(\mathbb{Z}/9),& \text{\cite{otaktozpn}},\\ 
 K_2(\mathbb{F}_{2}[\varepsilon]/\varepsilon^{2})\cong0 &\ncong\mathbb{Z}/2\cong K_2(\mathbb{Z}/4),& \text{\cite{k2nd, k2dvr}}.
\end{align*}
\end{proof}

\begin{rem}Although the suspension functor of the triangulated category $\ho(\C{W}_{R})$ can be easily seen to be naturally isomorphic to the identity functor (e.g. see \cite{kttc}), this is not true in general 
for the triangulated categories $\ho(\C{W}_{R}^{\deltan{n}})$. For example, the suspension of the morphism in $\ho(\C{W}_{R}^{\deltan{1}})$ defined by the square
$$\xymatrix{
k \ar[r]^{\scriptstyle \alpha} \ar[d] &R \ar[d]^{\scriptstyle q} \\
0 \ar[r] & k}
$$
is isomorphic to the morphism 
$$\xymatrix{
k \ar[r] \ar[d]_{\scriptstyle \alpha} & 0 \ar[d] \\
R \ar[r]^{\scriptstyle q} & k }
$$
These are isomorphic to the morphisms 
$$\xymatrix{
k \ar[r]^{\scriptstyle \alpha} \ar[d]_-{\scriptstyle 0} & R \ar[d]^{\scriptstyle q} \\
R \ar[r]^{\scriptstyle q} & k}
$$
and 
$$\xymatrix{
k \ar[r]^{\scriptstyle \alpha} \ar[d]_-{\scriptstyle \alpha} &R \ar[d]^{\scriptstyle 0} \\
R \ar[r]^{\scriptstyle q} & k}
$$
respectively. Then note that the sum of the two morphisms represents the zero morphism from $k \st{\scriptstyle \alpha}{\to} R$ to $R \st{\scriptstyle q}{\to} k$ in $\ho(\C{W}_{R}^{\deltan{1}})$. Thus the argument 
of \cite[2.3]{kttc} does not apply to show that the additivity theorem for derivator $K$-theory \cite{adkt} and the agreement on $K_1$ \cite{malt} contradict the localization conjecture.
\end{rem}




\providecommand{\bysame}{\leavevmode\hbox to3em{\hrulefill}\thinspace}
\providecommand{\MR}{\relax\ifhmode\unskip\space\fi MR }
\providecommand{\MRhref}[2]{%
  \href{http://www.ams.org/mathscinet-getitem?mr=#1}{#2}
}
\providecommand{\href}[2]{#2}

\end{document}